\renewcommand\footnotemark{}
\theoremstyle{plain}
\newtheorem{theorem}{Theorem}[section]
\newtheorem{lemma}[theorem]{Lemma}
\newtheorem{proposition}[theorem]{Proposition}
\newtheorem{corollary}[theorem]{Corollary}
\theoremstyle{definition}
\newtheorem{definition}[theorem]{Definition}
\theoremstyle{remark}
\def\th@plain{%
  \thm@notefont{}% same as heading font
  \itshape % body font
}
\def\th@definition{%
  \thm@notefont{}% same as heading font
  \normalfont % body font
} \makeatother
\setlist{font=\normalfont}
\DeclareMathAlphabet{\mathcal}{OMS}{cmsy}{m}{n} %
\newcommand{\set}[1]{\{#1\}}
\newcommand{\cset}[2]{\set{{#1}\colon{#2}}}
\newcommand{\lprod}[2]{\displaystyle\prod_{#1}^{#2}}
\newcommand{\lcup}[2]{\displaystyle\bigcup_{#1}^{#2}}
\newcommand{\gyr}[2]{\mathrm{gyr}\,{[{#1}]{#2}}}
\newcommand{\lcap}[2]{\displaystyle\bigcap_{#1}^{#2}}
\newcommand{\gen}[1]{\langle#1\rangle}
\newcommand{\NC}[1]{\gen{\overline{#1}}}
\newcommand{\cols}[1]{\mathbf{\mathcal{#1}}}
\newcommand{\aut}[1]{\mathrm{Aut}\,{(#1)}}
\newcommand{\id}[1]{\mathrm{id}_{#1}}
\newcommand{\igyr}[2]{{\mathrm{gyr^{-1}}[{#1}]}{#2}}
\newcommand{\res}[2]{{#1}\hskip-3pt\mid_{#2}}
\newcommand{\lmlt}[1]{\mathrm{LMlt}\,{(#1)}}
\newcommand{\sym}[1]{\mathrm{Sym}\,{(#1)}}
\newcommand{\symz}[1]{\mathrm{Sym}_0({#1})}
\newcommand{\rad}[1]{\mathrm{Rad}\,{(#1)}}
\newcommand{\abs}[1]{|#1|}
\newcommand{\Z}{\mathbb{Z}}
\DeclareMathOperator{\im}{Im}
\newcommand{\vphi}{\varphi}
\newcommand{\Gam}{\Gamma}
\newcommand{\sig}{\sigma}
\renewcommand{\setminus}{-}
\begin{document}
% ---------------------TITLE----------------------------------
\title{\bf Special subgroups of gyrogroups:\\ Commutators, nuclei and radical$^\ast$\footnote{$^\ast$This is the final version of the manuscript published
in Mathematics Interdisciplinary Research {\bf 1} (2016), pp.
53--68. The published version of the article is accessible via
\href{http://mir.kashanu.ac.ir/article_13907_2153.html}{{http://mir.kashanu.ac.ir/}}.}}
\author{Teerapong Suksumran$^\dag$\footnote{$^\dag$The author was financially supported
by Development and Promotion of Science and Technology
\mbox{Talents} Project (DPST) funded by Institute for Promotion of
Teaching Science and Technology (IPST),
Thailand.}\footnote{\copyright\,2016 Author. This manuscript version
is made available under the CC-BY-NC-ND 4.0 license
(\href{http://creativecommons.org/licenses/by-nc-nd/4.0/}{{http://creativecommons.org/licenses/by-nc-nd/4.0/}}).}\\
Department of Mathematics\\
North Dakota State University\\
Fargo, ND 58105, USA\\
and\\
Department of Mathematics and Computer Science\\
Faculty of Science, Chulalongkorn University\\Bangkok 10330,
Thailand\\[5pt]
\texttt{teerapong.suksumran@gmail.com}}
\date{}
\maketitle

%Abstract-----------------------------------------------------
\begin{abstract}
A gyrogroup is a nonassociative group-like structure modelled on the
space of relativistically admissible velocities with a binary
operation given by Einstein's velocity addition law. In this
article, we present a few of groups sitting inside a gyrogroup $G$,
including the commutator subgyrogroup, the left nucleus, and the
radical of $G$. The normal closure of the commutator subgyrogroup,
the left nucleus, and the radical of $G$ are in particular normal
subgroups of $G$. We then give a criterion to determine when a
subgyrogroup $H$ of a finite gyrogroup $G$, where the index
$[G\colon H]$ is the smallest prime dividing $|G|$, is normal in
$G$.
\end{abstract}
\textbf{Keywords.} gyrogroup, commutator subgyrogroup, nucleus of
gyrogroup, subgyrogroup of prime index, radical of
gyrogroup.\\[3pt]
\textbf{2010 MSC.} Primary 20E07; Secondary 20N05, 20B35.

\thispagestyle{empty}

\newpage

\section{Introduction}\label{Sec: Intro}
A gyrogroup, discovered by Abraham A. Ungar \cite{AU1997TPI}, is a
nonassociative group-like structure modelled on the space of
relativistically admissible velocities, together with Einstein's
velocity addition \cite{AU2007EVA}. It is remarkable that the
gyrogroup structure appears in various fields such as mathematical
physics \cite{JPSK2015HPM, AU2002THG}, non-Euclidean geometry
\cite{AU2008AHG, AU2010BCE}, group theory \cite{TFAU2000IDG,
TFAU2001GDG}, loop theory \cite{WKHU1998KLG, TSKW2015EGB}, harmonic
analysis \cite{MF2015HAM, MF2014HAE}, abstract algebra
\cite{TSKW2015ITG, TSKW2014LTG}, and analysis \cite{MF2009SCW,
TAOH2015GGS}.

\par This article explores an algebraic aspect of gyrogroups. Recall
that in abstract algebra the following theme recurs: given an object
$X$ and a subobject  $Y$, determine whether the quotient object
$X/Y$ has the same algebraic structure as $X$. It is known, for
instance, that a subgroup $\Xi$ of a group $\Gam$ gives rise to the
quotient group $\Gam/\Xi$ if and only if $\Xi$ is normal in $\Gam$.
Sometimes, it is possible to use information on a normal subgroup
$\Xi$ and on the quotient $\Gam/\Xi$ to obtain information on
$\Gam$. Therefore, determining the normal subgroups of $\Gam$ is
useful for studying properties of $\Gam$ itself. The situation in
gyrogroup theory is analogous. For example, the Lagrange theorem for
finite gyrogroups follows from the fact that every gyrogroup $G$ has
a normal subgroup $\Xi$ such that $G/\Xi$ is a gyrocommutative
gyrogroup \cite[Theorem 4.11]{TFAU2000IDG}. For more details, see
Section 5 of \cite{TSKW2014LTG}. From this point of view, we examine
some normal subgyrogroups of a gyrogroup that form groups under the
gyrogroup operation.

\par For basic knowledge of gyrogroup theory, the reader is referred
to \cite{AU2008AHG, TSKW2015ITG, TSKW2014LTG}. Here is the formal
definition of a gyrogroup.

\begin{definition}[Gyrogroup]\label{def: gyrogroup}
A groupoid $(G,\oplus)$ is a \textit{gyrogroup} if its binary
operation satisfies the following axioms.
\begin{enumerate}
    \item[(G1)] There is an element $0\in G$ such that $0\oplus a =
    a$ for all $a\in G$.
    \item[(G2)] For each $a\in G$, there is an element $b\in G$ such that
$b\oplus a = 0$.
    \item[(G3)] For all $a$, $b\in G$, there is an automorphism
$\gyr{a,b}{}\in\aut{G,\oplus}$ such that
    \begin{equation}\tag{left gyroassociative law} a\oplus (b\oplus c) = (a\oplus b)\oplus\gyr{a,
    b}{c}\end{equation}
    for all $c\in G$.
    \item[(G4)] For all $a$, $b\in G$, $\gyr{a\oplus b, b}{} = \gyr{a,
    b}{}$.\hfill(left loop property)
\end{enumerate}
\end{definition}

\section{The commutator subgyrogroup}
\par Throughout this section, $G$ is an arbitrary gyrogroup unless otherwise stated.

\subsection{The direct product and normal closure}
\par Recall that the intersection of normal subgroups of a group
$\Gam$ is again a normal subgroup of $\Gam$. This result continues
to hold for gyrogroups, as we will see shortly. Because of the
missing of associativity in gyrogroups, it is not straightforward to
\mbox{determine} whether a given subgyrogroup $H$ of a gyrogroup $G$
is normal in $G$. However, according to Theorem \ref{thm: Existence
of normal closure}, the smallest (by inclusion) normal subgyrogroup
of $G$ that contains $H$, called the normal closure of $H$, always
exists. The normal closure of $H$ and $H$ have some common features,
and sometimes it is possible to obtain information about $H$ from
the normal closure of $H$. See for instance Corollary \ref{cor:
commutator subgyrogroup is a subgroup}.

\par Given an indexed family of gyrogroups $\cset{G_i}{i\in I}$, the
\textit{direct product of ${G_i}$}, $i\in I$, denoted by
$\lprod{i\in I}{} G_i$, consists of all functions $f\colon I
\to\lcup{i\in I}{}G_i$ with the property that $f(i)\in G_i$ for all
$i\in I$. For $f, g\in\lprod{i\in I}{}G_i$, define a function
$f\oplus g$ by the equation
\begin{equation}\label{eqn: operation on direct product}
(f\oplus g)(i) = f(i)\oplus g(i),\qquad i\in I.
\end{equation}

\begin{theorem}\label{thm: direct product gyrogroup}
Let $\cset{G_i}{i\in I}$ be an indexed family of gyrogroups. The
direct product $\lprod{i\in I}{}G_i$ with operation defined by $(f,
g)\mapsto f\oplus g$ is a gyrogroup.
\end{theorem}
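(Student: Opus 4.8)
The plan is to verify the four gyrogroup axioms (G1)--(G4) for the direct product $P = \lprod{i\in I}{}G_i$ by working coordinatewise, reducing each axiom to the corresponding axiom already known to hold in each factor $G_i$. The guiding principle is that the operation on $P$ is defined pointwise by \eqref{eqn: operation on direct product}, so two functions in $P$ are equal if and only if they agree at every $i\in I$; thus every identity I must establish can be checked one coordinate at a time. Throughout I will write $0_i$ for the identity of $G_i$ and $\gyr{a,b}{}$ for the gyroautomorphism in the relevant factor.

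\textbf{Axioms (G1) and (G2).} First I would define the candidate identity $\mathbf{0}\in P$ by $\mathbf{0}(i) = 0_i$ for all $i\in I$. Then for any $f\in P$ and any $i$, we have $(\mathbf{0}\oplus f)(i) = 0_i\oplus f(i) = f(i)$ by (G1) in $G_i$, so $\mathbf{0}\oplus f = f$, giving (G1). For (G2), given $f\in P$, I would define $g\in P$ by letting $g(i)$ be a left inverse of $f(i)$ in $G_i$, which exists by (G2) applied in each factor; then $(g\oplus f)(i) = g(i)\oplus f(i) = 0_i = \mathbf{0}(i)$, so $g\oplus f = \mathbf{0}$.

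\textbf{Axiom (G3).} This is the step I expect to be the main obstacle, since it requires producing an automorphism of $P$ and verifying it is genuinely a gyroautomorphism. Given $f, g\in P$, I would define a map $\gyr{f,g}{}\colon P\to P$ coordinatewise by $(\gyr{f,g}{h})(i) = \gyr{f(i),g(i)}{h(i)}$, using the gyroautomorphism of $G_i$ guaranteed by (G3) in each factor. I then need to check three things: that $\gyr{f,g}{}$ maps $P$ into $P$ (clear, since each coordinate lands in $G_i$), that it is an automorphism of $(P,\oplus)$, and that it satisfies the left gyroassociative law. For the automorphism property, bijectivity follows because its coordinatewise inverse is $\gyr{f(i),g(i)}{}^{-1}$ in each factor, and the homomorphism property $\gyr{f,g}{(h\oplus k)} = \gyr{f,g}{h}\oplus\gyr{f,g}{k}$ is checked coordinatewise using that each $\gyr{f(i),g(i)}{}$ is a homomorphism. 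For the left gyroassociative law, I would evaluate both $(f\oplus(g\oplus h))(i)$ and $((f\oplus g)\oplus\gyr{f,g}{h})(i)$ and observe that they coincide by (G3) in $G_i$; since this holds for every $i$, the two functions are equal.

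\textbf{Axiom (G4).} Finally, the left loop property for $P$ reduces to the left loop property in each factor: I need $\gyr{f\oplus g,g}{} = \gyr{f,g}{}$ as maps on $P$, and evaluating both sides at an arbitrary $h\in P$ and coordinate $i$ gives $\gyr{(f\oplus g)(i),g(i)}{h(i)} = \gyr{f(i)\oplus g(i),g(i)}{h(i)} = \gyr{f(i),g(i)}{h(i)}$ by (G4) in $G_i$, so the two gyroautomorphisms agree. Having verified all four axioms coordinatewise, I conclude that $(P,\oplus)$ is a gyrogroup. The only genuinely delicate point is confirming in (G3) that the coordinatewise assembly of the individual gyroautomorphisms is itself a single well-defined automorphism of $P$; once the bijectivity and homomorphism checks are in place, the gyroassociative identity itself follows immediately from the pointwise definition of the operation.
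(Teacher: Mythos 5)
Your proposal is correct and follows exactly the same route as the paper: the paper's proof defines the zero function, the coordinatewise left inverse, and the coordinatewise gyroautomorphisms $(\gyr{f,g}{h})(i) = \gyr{f(i),g(i)}{h(i)}$, and then declares the verification of the axioms straightforward. You have simply written out in full the coordinatewise checks (including the automorphism property of the assembled gyrations) that the paper leaves to the reader.
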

\begin{proof}
Set $G = \lprod{i\in I}{}G_i$. The zero function, $i\mapsto 0$,
$i\in I$, is a left identity of $G$. For each $f\in G$, the function
$i \mapsto\ominus f(i)$, $i\in I$, is a left inverse of $f$. The
gyroautomorphisms of $G$ are given by
$$(\gyr{f, g}{h})(i) = \gyr{f(i), g(i)}{h(i)},\qquad i\in I,$$
for all $f, g, h\in G$. It is straightforward to check that the
axioms of a gyrogroup are satisfied.
\end{proof}

\begin{theorem}\label{thm: intersection of normal is normal}
Let $\cset{N_i}{i\in I}$ be an indexed family of normal
subgyrogroups of $G$. Then the intersection $\lcap{i\in I}{}N_i$ is
a normal subgyrogroup of $G$.
\end{theorem}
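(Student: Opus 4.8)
The plan is to exhibit $\lcap{i\in I}{}N_i$ as the kernel of a single gyrogroup homomorphism, thereby reducing the statement to the fact that kernels are normal. This is where Theorem \ref{thm: direct product gyrogroup} does the work: it lets me assemble the individual quotient maps into one map landing in a direct product. Since each $N_i$ is normal in $G$, I may form the quotient gyrogroup $G/N_i$, and the canonical projection $\pi_i\colon g\mapsto g\oplus N_i$ is a gyrogroup homomorphism with $\ker\pi_i=N_i$. By Theorem \ref{thm: direct product gyrogroup} the direct product $P=\lprod{i\in I}{}(G/N_i)$ is again a gyrogroup, so I can define $\Phi\colon G\to P$ by declaring $\Phi(g)$ to be the function $i\mapsto\pi_i(g)=g\oplus N_i$; this is well defined because $g\oplus N_i\in G/N_i$ for every $i$.

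Next I would check that $\Phi$ is a gyrogroup homomorphism and identify its kernel. For the homomorphism property, fix $g,h\in G$; then for every $i\in I$,
\[
\Phi(g\oplus h)(i)=(g\oplus h)\oplus N_i=(g\oplus N_i)\oplus(h\oplus N_i)=\bigl(\Phi(g)\oplus\Phi(h)\bigr)(i),
\]
where the middle equality is the homomorphism property of $\pi_i$ and the last step is the coordinatewise operation \eqref{eqn: operation on direct product}. Hence $\Phi(g\oplus h)=\Phi(g)\oplus\Phi(h)$. For the kernel, note that the identity of $P$ is the function $i\mapsto N_i$, so $g\in\ker\Phi$ exactly when $g\oplus N_i=N_i$ for all $i$, i.e. when $g\in N_i$ for all $i$; thus $\ker\Phi=\lcap{i\in I}{}N_i$. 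Since the kernel of any gyrogroup homomorphism is a normal subgyrogroup, $\lcap{i\in I}{}N_i$ is normal in $G$.

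I do not expect a serious obstacle here: the argument is essentially the gyrogroup transcription of the group-theoretic trick of mapping $\Gamma$ into $\prod_i\Gamma/N_i$. The one thing to get right is that $\Phi$ respects the structure of the direct product, which is exactly what the coordinatewise definition in Theorem \ref{thm: direct product gyrogroup} guarantees; everything else is bookkeeping with cosets. If one wished to avoid quotients altogether, the same result follows by the direct route of checking the two conditions characterizing normality — invariance $\gyr{a,b}{N}=N$ and coset coincidence $a\oplus N=N\oplus a$ — each of which transfers to $\lcap{i\in I}{}N_i$ because an injective map (a gyration, or a left or right translation, the latter by the cancellation laws) commutes with intersections; there the only subtle point is that coset coincidence is not literally an elementwise condition and relies on this injectivity to force the coset representatives for the various indices to agree.
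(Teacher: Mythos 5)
Your proof is correct and takes essentially the same approach as the paper: the paper also realizes $\bigcap_{i\in I}N_i$ as the kernel of the coordinatewise map from $G$ into the direct product of gyrogroups $G_i$ having the $N_i$ as kernels (your choice $G_i = G/N_i$ with the canonical projections is exactly such a realization), and then invokes normality of kernels. The only difference is that you write out the ``direct computation'' of the homomorphism property and of the kernel, which the paper leaves implicit.
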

\begin{proof}
For each $i\in I$, there exists a gyrogroup homomorphism $\vphi_i$
of $G$ to a gyrogroup $G_i$ such that $\ker{\vphi_i} = N_i$. Set $H
= \lprod{i\in I}{}G_i$. For each $a\in G$, define a function
$\vphi(a)$ by $\vphi(a)(i) = \vphi_i(a)$ for all $i\in I$. Then
$a\mapsto \vphi(a)$, $a\in G$, defines a gyrogroup homo-morphism
from $G$ to $H$. Direct computation shows that $\ker{\vphi} =
\lcap{i\in I}{}\ker{\vphi_i}$. Hence, $\lcap{i\in I}{}N_i =
\lcap{i\in I}{}\ker{\vphi_i} = \ker{\vphi}\unlhd G$.
\end{proof}

\begin{theorem}\label{thm: Existence of normal closure}
Let $A$ be a nonempty subset of $G$. Then there exists a unique
normal subgyrogroup of $G$, denoted by $\NC{A}$, such that
\begin{enumerate}
    \item\label{item: normal closure inclusion} $A\subseteq\NC{A}$, and
    \item\label{item: normal closure smallest} if $N\unlhd G$ and $A\subseteq N$, then $\NC{A}\subseteq
    N$.
\end{enumerate}
\end{theorem}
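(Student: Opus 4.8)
The plan is to construct $\NC{A}$ as the intersection of all normal subgyrogroups of $G$ that contain $A$, exactly as one builds the normal closure in ordinary group theory. First I would introduce the family
$$\mathcal{F} = \cset{N}{N\unlhd G \text{ and } A\subseteq N}$$
and observe that it is nonempty: since $A\subseteq G$ and $G\unlhd G$, the whole gyrogroup $G$ belongs to $\mathcal{F}$. This guarantees that the intersection I am about to form is taken over a nonempty family. I would then set $\NC{A} = \lcap{N\in\mathcal{F}}{}N$ and invoke Theorem \ref{thm: intersection of normal is normal}, applied to $\mathcal{F}$ regarded as a family indexed by itself, to conclude that $\NC{A}$ is a normal subgyrogroup of $G$.

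The two defining properties then follow from the elementary behaviour of intersections. For property (\ref{item: normal closure inclusion}), every $N\in\mathcal{F}$ satisfies $A\subseteq N$, so $A$ is contained in their intersection $\NC{A}$. For property (\ref{item: normal closure smallest}), if $N\unlhd G$ and $A\subseteq N$, then $N$ is one of the members of $\mathcal{F}$, whence $\NC{A} = \lcap{M\in\mathcal{F}}{}M\subseteq N$. Uniqueness is the standard universal-property argument: if two normal subgyrogroups both satisfy (\ref{item: normal closure inclusion}) and (\ref{item: normal closure smallest}), then applying (\ref{item: normal closure smallest}) to each one with the other in the role of $N$ yields mutual inclusions, so the two coincide.

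The only real content beyond bookkeeping is the normality of the intersection, which is supplied verbatim by Theorem \ref{thm: intersection of normal is normal}; with that in hand the rest of the argument is formal. The points demanding a little care are confirming that $\mathcal{F}$ is nonempty — that is, that $G$ is itself a normal subgyrogroup, so that the intersection is well defined rather than an empty intersection — and that Theorem \ref{thm: intersection of normal is normal}, as stated for an indexed family $\cset{N_i}{i\in I}$, legitimately applies to the set-indexed family $\mathcal{F}$. I expect no genuine obstacle, since the missing associativity of $G$ is entirely absorbed into the proof of Theorem \ref{thm: intersection of normal is normal}.
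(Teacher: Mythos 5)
Your proposal is correct and follows essentially the same route as the paper: both define $\NC{A}$ as the intersection of the family of all normal subgyrogroups of $G$ containing $A$, invoke Theorem \ref{thm: intersection of normal is normal} to see the intersection is a normal subgyrogroup, and deduce uniqueness from the minimality condition. Your extra remark that the family is nonempty (since $G\unlhd G$) is a worthwhile detail the paper leaves implicit, but it does not change the argument.
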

\begin{proof}
Set $\cols{A} = \cset{K\subseteq G}{K\unlhd G\textrm{ and
}A\subseteq K}$. By Theorem \ref{thm: intersection of normal is
normal}, $\NC{A} := \lcap{K\in\cols{A}}{}K$ forms a normal
subgyrogroup of $G$ satisfying the two conditions. The uniqueness of
$\NC{A}$ follows from condition \eqref{item: normal closure
smallest}.
\end{proof}

\begin{definition}[Normal closure]
Let $A$ be a nonempty subset of a gyrogroup $G$. The normal
subgyrogroup $\NC{A}$ in Theorem \ref{thm: Existence of normal
closure} is called the {\it normal closure of $A$} or {\it normal
subgyrogroup of $G$ generated by $A$}.
\end{definition}

\par According to Theorem \ref{thm: Existence of normal closure},
the normal closure of $A$ is the smallest (by inclusion) normal
subgyrogroup of $G$ that contains $A$. Note that if $A$ itself is a
normal sub-gyrogroup of $G$, then $\NC{A} = A$. In other words, any
normal subgyrogroup of $G$ equals its normal closure. The concept of
normal closures is needed in studying the commutator subgyrogroup of
a gyrogroup in the next section.

\subsection{Commutators}
\par In this section, we extend the notion of commutators, which is
defined for groups, to gyrogroups. Recall that if $\Gam$ is a group,
then the commutator subgroup of $\Gam$, denoted by $\Gam'$, is the
smallest normal subgroup of $\Gam$ such that the quotient
$\Gam/\Gam'$ is an abelian group. Unlike the situation in group
theory, it is still an open problem whether the commutator
subgyrogroup of a gyrogroup $G$, denoted by $G'$, is normal in $G$.
However, it is true that if $G'$ is normal in $G$, then the quotient
$G/G'$ forms a gyrocommutative gyrogroup. Therefore, we focus
attention on the normal closure of $G'$ instead of $G'$. It turns
out that the normal closure of $G'$ is the smallest normal
subgyrogroup of $G$ such that the quotient $G/\NC{G'}$ is
gyrocommutative. Further, the normal closure of $G'$ (and hence
$G'$) forms a subgroup of $G$, as we will see shortly.

\par Let $G$ be a gyrogroup. Given $a, b\in G$,
define the \textit{commutator of $a$ and $b$}, denoted by $[a, b]$,
by the equation
\begin{equation}\label{eqn: commutator of a and b}
[a, b] = \ominus(a\oplus b)\oplus\gyr{a, b}{(b\oplus a)}.
\end{equation}
Define
\begin{equation}
G' = \gen{[a, b]\colon a, b\in G},
\end{equation}
the subgyrogroup of $G$ generated by commutators of elements from
$G$, called the \textit{\mbox{commutator} subgyrogroup} of $G$. Note
that if $G$ is a gyrogroup with trivial gyroauto-morphisms, then $G$
becomes a group, $[a, b]$ becomes the group-theoretic commutator of
$a$ and $b$, and $G'$ becomes the familiar commutator subgroup of
$G$.

\begin{theorem}\label{thm: property of commutator}
Let $G$ be a gyrogroup. Then the following hold.
\begin{enumerate}
    \item\label{item: identity of [a, b]} For all $a, b\in G$, $[a, b] = 0$ if and only if $a\oplus b = \gyr{a, b}{(b\oplus
    a)}$.
    \item\label{item: identity of -(a+b)} For all $a, b\in G$, $\ominus(a\oplus b) = (\ominus a\ominus b)\oplus[\ominus a, \ominus
    b]$.
    \item\label{item: invariant of [a, b] under homomorphism} If $\vphi$ is a gyrogroup homomorphism of $G$, then $\vphi([a,
    b]) = [\vphi(a), \vphi(b)]$ for all $a, b\in G$.
    \item\label{item: invariant of G' under tau} If $\tau\in\aut{G}$, then $\tau(G') = G'$.
    \item\label{item: characterization of gyrocommutative via G'} $G' = \set{0}$ if and only if $G$ is gyrocommutative.
    \item\label{item: G/G' is gyrocommutative} If $G'\unlhd G$, then $G/G'$ is gyrocommutative.
\end{enumerate}
\end{theorem}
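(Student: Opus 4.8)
The plan is to prove item \eqref{item: identity of [a, b]} first by a short cancellation argument, observe that item \eqref{item: characterization of gyrocommutative via G'} is then immediate, handle item \eqref{item: identity of -(a+b)} by a direct computation with standard gyrogroup identities, and finally deduce items \eqref{item: invariant of [a, b] under homomorphism}, \eqref{item: invariant of G' under tau}, and \eqref{item: G/G' is gyrocommutative} from the functorial behaviour of the commutator. Throughout I would freely use the elementary gyrogroup facts available from the cited references: left cancellation, $\ominus(\ominus x) = x$, two-sidedness of inverses, the inverse-of-a-sum identity $\ominus(a\oplus b) = \gyr{a, b}{(\ominus b\ominus a)}$, the even symmetry $\gyr{\ominus a, \ominus b}{} = \gyr{a, b}{}$, and the fact that a gyrogroup homomorphism preserves the operation, inverses, and gyrations. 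I expect item \eqref{item: identity of -(a+b)} to be the main obstacle, as it is the only part that requires assembling several of these identities in the right order.

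For item \eqref{item: identity of [a, b]} I would read off from \eqref{eqn: commutator of a and b} that $[a, b] = 0$ means $\ominus(a\oplus b)\oplus\gyr{a, b}{(b\oplus a)} = 0$. Adding $a\oplus b$ on the left and using left cancellation together with $\ominus(\ominus x) = x$ forces $\gyr{a, b}{(b\oplus a)} = a\oplus b$; conversely, if $a\oplus b = \gyr{a, b}{(b\oplus a)}$ then $[a, b] = \ominus(a\oplus b)\oplus(a\oplus b) = 0$ by the left-inverse axiom (G2). Since a gyrogroup is gyrocommutative precisely when $a\oplus b = \gyr{a, b}{(b\oplus a)}$ holds for all $a, b$, item \eqref{item: characterization of gyrocommutative via G'} then drops out: $G' = \set{0}$ iff every generator $[a, b]$ vanishes iff $G$ is gyrocommutative.

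For item \eqref{item: identity of -(a+b)} I would expand the right-hand side. Writing $u = \ominus a\ominus b$, the definition of $[\ominus a, \ominus b]$ rewrites the right-hand side as $u\oplus(\ominus u\oplus\gyr{\ominus a, \ominus b}{(\ominus b\ominus a)})$, which collapses by left cancellation to $\gyr{\ominus a, \ominus b}{(\ominus b\ominus a)}$. Invoking the even symmetry $\gyr{\ominus a, \ominus b}{} = \gyr{a, b}{}$ and then the inverse-of-a-sum identity $\ominus(a\oplus b) = \gyr{a, b}{(\ominus b\ominus a)}$ identifies this with $\ominus(a\oplus b)$, as desired. The difficulty here is entirely one of choosing the grouping $u\oplus(\ominus u\oplus\,\cdot\,)$ so that left cancellation does the work and of recalling the even-symmetry and inverse-of-a-sum identities; once these are in hand the computation is routine.

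For the remaining items I would argue functorially. Applying a homomorphism $\vphi$ termwise to \eqref{eqn: commutator of a and b}, using $\vphi(\ominus x) = \ominus\vphi(x)$ and $\vphi(\gyr{a, b}{c}) = \gyr{\vphi(a), \vphi(b)}{\vphi(c)}$, gives $\vphi([a, b]) = [\vphi(a), \vphi(b)]$, which is item \eqref{item: invariant of [a, b] under homomorphism}. Taking $\vphi = \tau\in\aut{G}$, I would note that $\tau$ carries the generating set $\set{[a, b]\colon a, b\in G}$ onto $\set{[\tau(a), \tau(b)]\colon a, b\in G} = \set{[a, b]\colon a, b\in G}$ by surjectivity of $\tau$; since an automorphism sends the subgyrogroup generated by a set to the subgyrogroup generated by its image, this yields $\tau(G') = G'$, giving item \eqref{item: invariant of G' under tau}. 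Finally, assuming $G'\unlhd G$, I would apply item \eqref{item: invariant of [a, b] under homomorphism} to the canonical projection $\pi\colon G\to G/G'$: every commutator in $G/G'$ equals $\pi([a, b]) = [\pi(a), \pi(b)]$ for some $a, b\in G$, and since $[a, b]\in G'$ this image is the identity of $G/G'$. Hence $(G/G')' = \set{0}$, and item \eqref{item: characterization of gyrocommutative via G'} applied to $G/G'$ shows that $G/G'$ is gyrocommutative, which is item \eqref{item: G/G' is gyrocommutative}.
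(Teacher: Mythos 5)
Your proposal is correct, and for five of the six items it essentially coincides with the paper's proof: item \eqref{item: identity of [a, b]} is left cancellation; item \eqref{item: identity of -(a+b)} is the same collapse of $(\ominus a\ominus b)\oplus[\ominus a,\ominus b]$ followed by the even symmetry $\gyr{\ominus a, \ominus b}{} = \gyr{a, b}{}$ and the inverse-of-a-sum identity; item \eqref{item: invariant of [a, b] under homomorphism} uses preservation of $\oplus$, $\ominus$, and gyrations; your item \eqref{item: invariant of G' under tau} invokes the principle that automorphisms carry generated subgyrogroups onto generated subgyrogroups, which is proved by exactly the paper's two inclusions $G'\subseteq\tau(G')$ and $G'\subseteq\tau^{-1}(G')$; and item \eqref{item: characterization of gyrocommutative via G'} is read off from item \eqref{item: identity of [a, b]}. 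The genuine divergence is item \eqref{item: G/G' is gyrocommutative}. The paper proves it by a coset computation (using item \eqref{item: identity of -(a+b)} and the coset-operation identities) showing that $G/G'$ satisfies the automorphic inverse property $\ominus(X\oplus Y) = \ominus X\ominus Y$, and then cites Ungar's theorem that a gyrogroup is gyrocommutative if and only if it has this property. You instead apply item \eqref{item: invariant of [a, b] under homomorphism} to the canonical projection $\Pi\colon G\to G/G'$: by surjectivity every commutator in $G/G'$ equals $\Pi([a, b])$, which is the identity coset because $[a, b]\in G' = \ker{\Pi}$; hence $(G/G')' = \set{0\oplus G'}$, and item \eqref{item: characterization of gyrocommutative via G'}, applied to the gyrogroup $G/G'$, yields gyrocommutativity. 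Both arguments are valid. Yours is more self-contained: it needs only the earlier items together with the standard facts that $\Pi$ is a homomorphism and that $x\oplus G' = G'$ exactly when $x\in G'$, and it avoids the external automorphic-inverse-property characterization; it also transfers verbatim to the implication \eqref{item: N contains the commutators} $\Rightarrow$ \eqref{item: G/N is gyrocomm} of Proposition \ref{prop: G' is smallest with gyrocommutative quotient}, where the paper instead repeats its coset computation. What the paper's route buys is the explicit record that $G/G'$ satisfies the automorphic inverse property, with item \eqref{item: identity of -(a+b)} doing real work rather than serving only as a stand-alone identity.
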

\begin{proof}
Item \eqref{item: identity of [a, b]} follows from the left
cancellation law.

\par To verify item \eqref{item: identity of -(a+b)}, we compute
\begin{eqnarray*}
(\ominus a\ominus b)\oplus[\ominus a, \ominus b] &=& \gyr{\ominus a,
\ominus b}{(\ominus b\ominus a)}\\
{} &=& \gyr{a, b}{(\ominus b\ominus a)}\\
{} &=& \ominus(a\oplus b).
\end{eqnarray*}
We have the first equation from the definition of a commutator; the
second \mbox{equation} from Theorem 2.34 of \cite{AU2008AHG}; and
the last equation from Theorem 2.11 of \cite{AU2008AHG}.

\par \eqref{item: invariant of [a, b] under
homomorphism} By Proposition 23 of \cite{TSKW2015ITG},
\begin{eqnarray*}
\vphi([a, b]) &=& \vphi(\ominus(a\oplus b)\oplus\gyr{a, b}{(b\oplus
a)})\\
{} &=& \ominus(\vphi(a)\oplus \vphi(b))\oplus\gyr{\vphi(a),
\vphi(b)}{(\vphi(b)\oplus \vphi(a))}\\
{} &=& [\vphi(a), \vphi(b)].
\end{eqnarray*}

\par \eqref{item: invariant of G' under tau} Let $\tau\in\aut{G}$.
First, we prove that $G'\subseteq \tau(G')$. For all $a, b\in G$, we
have $[a, b] = \tau([\tau^{-1}(a), \tau^{-1}(b)])$ belongs to
$\tau(G')$. Hence, $\tau(G')$ contains all the commutators of $G$.
Since $G'$ is the smallest subgyrogroup of $G$ containing the
commutators of $G$ and $\tau(G')\leqslant G$, it follows that
$G'\subseteq \tau(G')$. Since $\tau^{-1}$ is also in $\aut{G}$,
$G'\subseteq \tau^{-1}(G')$. This implies $\tau(G')\subseteq
\tau(\tau^{-1}(G')) = G'$ since $\tau$ is a bijection. Hence,
$\tau(G') = G'$.

\par Item \eqref{item: characterization of gyrocommutative via
G'} follows immediately from item \eqref{item: identity of [a, b]}.

\par \eqref{item: G/G' is gyrocommutative} Suppose that $G'\unlhd G$.
Then $G/G'$ has the quotient gyrogroup structure. Let $a, b\in G$.
According to Theorem 27 of \cite{TSKW2015ITG}, we have
\begin{align}
\ominus ((a\oplus G')\oplus(b\oplus G')) &= \ominus((a\oplus
b)\oplus G')\notag\\
{} &= (\ominus(a\oplus b))\oplus G'\notag\\
{} &= ((\ominus a\ominus b)\oplus [\ominus a, \ominus b])\oplus
G'\notag\\
{} &= ((\ominus a\ominus b)\oplus G')\oplus ([\ominus a, \ominus
b]\oplus G')\notag\\
{} &= (\ominus a\ominus b)\oplus G'\notag\\
{} &= (\ominus a\oplus G')\oplus (\ominus b\oplus G')\notag\\
{} &= \ominus (a\oplus G')\ominus (b\oplus G').\notag
\end{align}
This proves that $G/G'$ satisfies the automorphic inverse property
and so $G/G'$ is gyrocommutative by Theorem 3.2 of \cite{AU2008AHG}.
\end{proof}

\par A subgyrogroup $H$ of $G$ is called an {\it L-subgyrogroup} of $G$, denoted
by \mbox{$H\leqslant_L G$}, if $\gyr{a, h}{(H)} = H$ for all $a\in
G$ and $h\in H$. For more information about L-subgyrogroups, see
Section 4 of \cite{TSKW2015ITG}.

\begin{theorem}\label{thm: G' is an L subgyrogroup}
The commutator subgyrogroup of $G$ is an L-subgyrogroup of $G$.
\end{theorem}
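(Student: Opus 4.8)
The plan is to reduce the L-subgyrogroup condition for $G'$ entirely to the automorphism-invariance statement already proved in Theorem \ref{thm: property of commutator}. By definition, to show $G' \leqslant_L G$ I must verify that $\gyr{a, h}{(G')} = G'$ for every $a \in G$ and every $h \in G'$. The single observation that drives the whole argument is that each gyration $\gyr{a, h}{}$ is, by axiom (G3), a bona fide automorphism of the groupoid $(G, \oplus)$; that is, $\gyr{a, h}{} \in \aut{G}$.

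With this in hand, the conclusion is immediate. First I would fix arbitrary $a \in G$ and $h \in G'$ and set $\tau = \gyr{a, h}{}$, noting $\tau \in \aut{G}$. Then I would invoke Theorem \ref{thm: property of commutator}\eqref{item: invariant of G' under tau}, which asserts that $\tau(G') = G'$ for every $\tau \in \aut{G}$, to obtain $\gyr{a, h}{(G')} = \tau(G') = G'$. Since $a$ and $h$ were arbitrary (and in particular the hypothesis $h \in G'$ is not even needed, the invariance holding for all gyrations), and since $G'$ is a subgyrogroup by construction, this establishes that $G'$ is an L-subgyrogroup of $G$.

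The main point to appreciate is that there is essentially no obstacle here beyond recognizing that gyroautomorphisms belong to $\aut{G}$: all of the genuine work has already been carried out in proving the invariance property, whose argument used the minimality of $G'$ among subgyrogroups containing the commutators together with closure of the commutator set under automorphisms. Once that property is available, the L-subgyrogroup claim is a one-line corollary. It may therefore be worth phrasing the statement as a direct consequence of item \eqref{item: invariant of G' under tau}, emphasizing that the gyrations form a distinguished family of automorphisms each of which fixes $G'$ setwise.
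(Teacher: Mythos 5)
Your proposal is correct and is essentially identical to the paper's own proof: both reduce the claim to Theorem \ref{thm: property of commutator}\eqref{item: invariant of G' under tau}, observing that each gyroautomorphism $\gyr{a, h}{}$ lies in $\aut{G}$ and therefore fixes $G'$ setwise. Your write-up merely spells out the one-line argument in more detail.
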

\begin{proof}
By Theorem \ref{thm: property of commutator} \eqref{item: invariant
of G' under tau}, $G'$ is invariant under the gyroautomorphisms of
$G$. Hence, $G'\leqslant_L G$.
\end{proof}

\newpage

\begin{proposition}\label{prop: G' is smallest with gyrocommutative quotient}
Let $N$ be a normal subgyrogroup of $G$. The following are
\mbox{equivalent}:
\begin{enumerate}
    \item\label{item: G/N is gyrocomm} $G/N$ is gyrocommutative.
    \item\label{item: N contains G'} $G'\subseteq N$.
    \item\label{item: N contains the commutators} $[a, b]\in N$ for all $a, b\in G$.
\end{enumerate}
\end{proposition}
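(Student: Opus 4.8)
The plan is to show that all three conditions are governed by the canonical projection $\pi\colon G\to G/N$, $a\mapsto a\oplus N$, which (since $N\unlhd G$) is a surjective gyrogroup homomorphism with kernel $N$. The equivalence \eqref{item: N contains G'} $\Leftrightarrow$ \eqref{item: N contains the commutators} is immediate from the definition of $G'$: because $G'$ is the subgyrogroup generated by the commutators of $G$, the inclusion $G'\subseteq N$ forces every $[a,b]$ into $N$, while conversely any subgyrogroup $N$ containing every commutator must contain the subgyrogroup they generate, namely $G'$. Thus it remains only to link condition \eqref{item: G/N is gyrocomm} to condition \eqref{item: N contains the commutators}.

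The key identity is Theorem \ref{thm: property of commutator} \eqref{item: invariant of [a, b] under homomorphism} applied to $\pi$, which gives $\pi([a,b]) = [\pi(a),\pi(b)]$ for all $a,b\in G$. For \eqref{item: G/N is gyrocomm} $\Rightarrow$ \eqref{item: N contains the commutators}: if $G/N$ is gyrocommutative, then by Theorem \ref{thm: property of commutator} \eqref{item: characterization of gyrocommutative via G'} (applied to the gyrogroup $G/N$) every commutator of $G/N$ equals the identity element of $G/N$; hence $\pi([a,b]) = [\pi(a),\pi(b)]$ is trivial, which says exactly that $[a,b]\in\ker\pi = N$. For \eqref{item: N contains the commutators} $\Rightarrow$ \eqref{item: G/N is gyrocomm}: if every $[a,b]\in N$, then $[\pi(a),\pi(b)] = \pi([a,b])$ is trivial for all $a,b$; since $\pi$ is onto, every pair of elements of $G/N$ is of the form $(\pi(a),\pi(b))$, so every commutator of $G/N$ is trivial and $(G/N)'$ is the trivial subgyrogroup. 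Theorem \ref{thm: property of commutator} \eqref{item: characterization of gyrocommutative via G'} then forces $G/N$ to be gyrocommutative, closing the equivalence.

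The step I expect to need the most care is the direction \eqref{item: N contains the commutators} $\Rightarrow$ \eqref{item: G/N is gyrocomm}, where one must pass from the triviality of each generating commutator of $G/N$ to the triviality of $(G/N)'$ itself. This is precisely where the surjectivity of $\pi$ is essential: it guarantees that \emph{every} commutator of $G/N$ arises as an image $[\pi(a),\pi(b)]$, so that triviality of these images genuinely exhausts the generators of $(G/N)'$, after which Theorem \ref{thm: property of commutator} \eqref{item: characterization of gyrocommutative via G'} applies. Everything else reduces to routine bookkeeping with the homomorphism $\pi$ and its kernel, so no new gyrogroup identity is needed beyond those already recorded in Theorem \ref{thm: property of commutator}.
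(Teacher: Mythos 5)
Your proof is correct, and for the crucial direction it takes a genuinely different route from the paper. The equivalence \eqref{item: N contains G'} $\Leftrightarrow$ \eqref{item: N contains the commutators} is handled the same way in both arguments (minimality of $G'$ as the generated subgyrogroup), and your \eqref{item: G/N is gyrocomm} $\Rightarrow$ \eqref{item: N contains the commutators} is essentially a repackaging of the paper's argument: the paper expands $[a,b]\oplus N$ by hand using Theorem 27 of \cite{TSKW2015ITG} to get $[a,b]\oplus N = 0\oplus N$, which is exactly the statement $\pi([a,b]) = [\pi(a),\pi(b)] = 0\oplus N$ that you extract from Theorem \ref{thm: property of commutator} \eqref{item: invariant of [a, b] under homomorphism}. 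The real divergence is in \eqref{item: N contains the commutators} $\Rightarrow$ \eqref{item: G/N is gyrocomm}: the paper re-runs the coset computation of Theorem \ref{thm: property of commutator} \eqref{item: G/G' is gyrocommutative}, verifying the automorphic inverse property $\ominus(X\oplus Y) = \ominus X \ominus Y$ in $G/N$ via Theorem \ref{thm: property of commutator} \eqref{item: identity of -(a+b)} and then invoking Theorem 3.2 of \cite{AU2008AHG}, whereas you use surjectivity of $\pi$ to conclude that \emph{every} commutator of $G/N$ is of the form $\pi([a,b])$, hence trivial, so $(G/N)' = \set{0\oplus N}$, and then apply Theorem \ref{thm: property of commutator} \eqref{item: characterization of gyrocommutative via G'} to the gyrogroup $G/N$. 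Your route is more modular: it repeats no computation, avoids the automorphic-inverse-property characterization and the external citation to \cite{AU2008AHG} entirely, and rests only on left cancellation (through items \eqref{item: identity of [a, b]} and \eqref{item: characterization of gyrocommutative via G'}) plus the fact that the canonical projection is a surjective gyrogroup homomorphism with kernel $N$ --- a fact the paper itself uses in the proof of Theorem \ref{thm: unique normal subgyrogroup}. The paper's route, by contrast, stays at the level of explicit coset arithmetic and never needs surjectivity. One small point worth noting: your application of Theorem \ref{thm: property of commutator} \eqref{item: invariant of [a, b] under homomorphism} to $\pi\colon G\to G/N$ requires reading ``gyrogroup homomorphism of $G$'' as allowing homomorphisms into a \emph{different} gyrogroup; this is indeed the reading the paper adopts when it applies the same item to $\vphi\colon G\to A$ in Theorem \ref{thm: unique normal subgyrogroup}, so there is no gap.
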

\begin{proof}
\eqref{item: G/N is gyrocomm} $\Rightarrow$ \eqref{item: N contains
G'} Let $a, b\in G$. Set $X = a\oplus N$ and $Y = b\oplus N$. Since
$G/N$ is gyro-commutative, $X\oplus Y = \gyr{X, Y}{(Y\oplus X)}$.
From Theorem 27 of \cite{TSKW2015ITG}, we have $$(a\oplus b)\oplus N
= (\gyr{a, b}{(b\oplus a)})\oplus N.$$ It follows that
\begin{align}
[a, b]\oplus N &= (\ominus (a\oplus b)\oplus\gyr{a, b}{(b\oplus
a)})\oplus N\notag\\
{} & = \ominus((a\oplus b)\oplus N)\oplus (\gyr{a, b}{(b\oplus
a)}\oplus N)\notag\\
{} &= \ominus((a\oplus b)\oplus N)\oplus ((a\oplus b)\oplus N)\notag\\
{} &= 0\oplus N.\notag
\end{align}
Hence, $[a, b]\in N$ for all $a, b\in G$ and so $G'\subseteq N$ by
the minimality of $G'$.

\par The implication \eqref{item: N contains G'} $\Rightarrow$ \eqref{item: N contains the
commutators} is trivial.

\par \eqref{item: N contains the commutators} $\Rightarrow$ \eqref{item: G/N is
gyrocomm} Since $N\unlhd G$, $G/N$ admits the quotient gyrogroup
structure. The proof that $G/N$ is gyrocommutative follows the same
steps as in the proof of Theorem \ref{thm: property of commutator}
\eqref{item: G/G' is gyrocommutative}.
\end{proof}

\begin{theorem}\label{thm: unique normal subgyrogroup}
The normal closure of $G'$ is the unique normal subgyrogroup of $G$
such that
\begin{enumerate}
    \item\label{item: g/NC(G') is gyrocomm} $G/\NC{G'}$ is gyrocommutative, and
    \item\label{item: universal property of NC(G')} if $\vphi\colon G\to A$ is a gyrogroup homomorphism into
    a gyrocommutative gyrogroup $A$, then $\vphi$ factors through
    $\NC{G'}$ in the sense that $\NC{G'}\subseteq \ker{\vphi}$.
\end{enumerate}
\end{theorem}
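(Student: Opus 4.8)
The plan is to prove the two properties separately and then establish uniqueness, leveraging the results already collected about $G'$ and $\NC{G'}$.

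The plan is to verify the two stated properties and then deduce uniqueness by a standard universal-property argument, with essentially all of the substantive work already done by the results above. For property \eqref{item: g/NC(G') is gyrocomm}, I would simply note that $\NC{G'}$ is by construction a normal subgyrogroup of $G$ with $G'\subseteq\NC{G'}$. Proposition \ref{prop: G' is smallest with gyrocommutative quotient}, applied to the normal subgyrogroup $N=\NC{G'}$, asserts that $G/N$ is gyrocommutative precisely when $G'\subseteq N$; since the latter holds, $G/\NC{G'}$ is gyrocommutative and nothing further is required.

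For property \eqref{item: universal property of NC(G')}, let $\varphi\colon G\to A$ be a gyrogroup homomorphism into a gyrocommutative gyrogroup $A$. The idea is to trap every commutator of $G$ inside $\ker\varphi$. By Theorem \ref{thm: property of commutator}\eqref{item: invariant of [a, b] under homomorphism} we have $\varphi([a,b])=[\varphi(a),\varphi(b)]$, and since $A$ is gyrocommutative, Theorem \ref{thm: property of commutator}\eqref{item: characterization of gyrocommutative via G'} forces every commutator in $A$ to equal $0$; hence $\varphi([a,b])=0$, so that $[a,b]\in\ker\varphi$ for all $a,b\in G$. As $\ker\varphi$ is a subgyrogroup of $G$ and $G'$ is the smallest subgyrogroup containing all commutators, $G'\subseteq\ker\varphi$. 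Because $\ker\varphi$ is moreover normal in $G$, the minimality in Theorem \ref{thm: Existence of normal closure} yields $\NC{G'}\subseteq\ker\varphi$, which is the asserted factorization.

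For uniqueness, suppose $M\unlhd G$ also satisfies \eqref{item: g/NC(G') is gyrocomm} and \eqref{item: universal property of NC(G')}. The key move is to feed each object's canonical projection into the other's universal property. Since $M$ satisfies \eqref{item: g/NC(G') is gyrocomm}, the projection $\pi_M\colon G\to G/M$ maps into a gyrocommutative gyrogroup, so applying \eqref{item: universal property of NC(G')} for $\NC{G'}$ gives $\NC{G'}\subseteq\ker\pi_M=M$. Symmetrically, $G/\NC{G'}$ is gyrocommutative, so the projection $\pi\colon G\to G/\NC{G'}$ maps into a gyrocommutative gyrogroup, and applying \eqref{item: universal property of NC(G')} for $M$ gives $M\subseteq\ker\pi=\NC{G'}$. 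Hence $M=\NC{G'}$.

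I expect no genuine obstacle, since Proposition \ref{prop: G' is smallest with gyrocommutative quotient} and Theorem \ref{thm: property of commutator} supply all the real content. The only two points demanding care are recording that $\ker\varphi$ is a normal subgyrogroup, so that the minimality of the normal closure may be invoked, and recognizing that uniqueness is forced by cross-applying the two universal properties to the canonical quotient maps, a routine but essential step.
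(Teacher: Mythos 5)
Your proposal is correct and follows essentially the same route as the paper: Proposition \ref{prop: G' is smallest with gyrocommutative quotient} for gyrocommutativity of the quotient, the homomorphism identity $\vphi([a,b])=[\vphi(a),\vphi(b)]=0$ plus minimality of $G'$ and of $\NC{G'}$ for the factorization property, and cross-application of the universal property to the canonical projections for uniqueness. The only cosmetic difference is that the paper phrases uniqueness with two arbitrary normal subgyrogroups $K_1,K_2$ satisfying both conditions rather than comparing a candidate $M$ against $\NC{G'}$ directly, which is the same argument.
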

\begin{proof}
\par By Theorem \ref{thm: Existence of normal closure},
$\NC{G'}\unlhd G$ and $G'\subseteq \NC{G'}$. Hence, by Proposition
\ref{prop: G' is smallest with gyrocommutative quotient},
$G/\NC{G'}$ is gyrocommutative. Suppose that $\vphi\colon G\to A$ is
a gyrogroup homomorphism of $G$, where $A$ is a gyrocommutative
gyrogroup. For $a, b\in G$, we have $$\vphi([a, b]) = [\vphi(a),
\vphi(b)] = 0$$ since $\vphi(a), \vphi(b)\in A$ and $A$ is
gyrocommutative. Thus, $[a, b]\in\ker{\vphi}$ for all $a, b\in G$,
which implies $G'\subseteq \ker{\vphi}$. Since $\ker{\vphi}\unlhd
G$,  it follows from the minimality of $\NC{G'}$ that
$\NC{G'}\subseteq\ker{\vphi}$.

\par (Uniqueness) Assume that $K_1$ and $K_2$ are normal
subgyrogroups of $G$ that satisfy the two conditions. Let
$\Pi_1\colon G\to G/K_1$ and $\Pi_2\colon G\to G/K_2$ be the
canonical projections. As $K_1$ satisfies the second condition and
$\Pi_2$ is a gyrogroup homomorphism, we have $K_1\subseteq
\ker{\Pi_2} = K_2$. Interchanging the roles of $K_1$ and $K_2$, one
obtains that $K_2\subseteq\ker{\Pi_1} = K_1$. Hence, $K_1 = K_2$.
\end{proof}

\par Theorem \ref{thm: unique normal subgyrogroup} implies the
{\it universal property} of the normal closure of $G'$: given any
gyrogroup homomorphism $\vphi$ from $G$ to a gyrocommutative
gyrogroup $A$, there is a unique gyrogroup homomorphism $\Phi\colon
G/\NC{G'}\to A$ such that $\Phi\circ\Pi = \vphi$, that is, the
following diagram commutes.
$$
\begin{tikzpicture}[description/.style={fill=white,inner sep=2pt}]
\matrix (m) [matrix of math nodes, row sep=0.5em, column sep=2em,
text height=1.5ex, text depth=0.25ex]
{ G & & A \\
& \scalebox{2}{$\circlearrowleft$} & \\
& G/\NC{G'} & \\ }; \path[->] (m-1-1) edge node[auto]{$\vphi$}
(m-1-3) edge node[below left] {$\Pi$} (m-3-2); \path[dotted,<-]
(m-1-3) edge node[auto] {$\Phi$} (m-3-2);
\end{tikzpicture}
$$
\noindent Here, $\Pi$ denotes the canonical projection given by
$\Pi(a) = a\oplus\NC{G'}$ for all $a\in G$, and $\Phi$ is given by
\begin{equation}
\Phi(a\oplus\NC{G'}) = \vphi(a)
\end{equation}
for all $a\in G$.

\begin{theorem}\label{thm: minimality of normal closure of G'}
Let $N$ be a normal subgyrogroup of $G$. Then $G/N$ is
gyrocommutative if and only if $\NC{G'}\subseteq N$.
\end{theorem}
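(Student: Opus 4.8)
The plan is to derive this directly from Proposition \ref{prop: G' is smallest with gyrocommutative quotient} together with the defining minimality property of the normal closure in Theorem \ref{thm: Existence of normal closure}, so that the statement becomes essentially a one-line consequence in each direction. The key observation is that, for a normal subgyrogroup $N$, the conditions ``$G/N$ is gyrocommutative'' and ``$G'\subseteq N$'' are already known to be equivalent, so the only new content is to promote ``contains $G'$'' to ``contains $\NC{G'}$'', which is exactly what the universal property of the normal closure handles.

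For the forward direction, I would assume $G/N$ is gyrocommutative. Since $N\unlhd G$, Proposition \ref{prop: G' is smallest with gyrocommutative quotient} applies and yields $G'\subseteq N$. Now $N$ is a normal subgyrogroup of $G$ containing $G'$, so condition \eqref{item: normal closure smallest} of Theorem \ref{thm: Existence of normal closure} (the minimality of the normal closure) gives $\NC{G'}\subseteq N$ at once.

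For the reverse direction, I would assume $\NC{G'}\subseteq N$. By condition \eqref{item: normal closure inclusion} of Theorem \ref{thm: Existence of normal closure} we have $G'\subseteq\NC{G'}$, hence $G'\subseteq N$. Applying Proposition \ref{prop: G' is smallest with gyrocommutative quotient} once more—again legitimate because $N\unlhd G$, so $G/N$ carries the quotient gyrogroup structure—yields that $G/N$ is gyrocommutative.

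I do not anticipate a genuine obstacle here: both implications are simply a repackaging of already-established facts, and no fresh computation involving the gyroassociative law or the gyroautomorphisms is required. The one point to keep in mind is that Proposition \ref{prop: G' is smallest with gyrocommutative quotient} is stated only for normal subgyrogroups, so I must invoke the hypothesis $N\unlhd G$ before each use—but that hypothesis is given. In effect, this theorem records that $\NC{G'}$ is the smallest normal subgyrogroup of $G$ with gyrocommutative quotient, complementing the uniqueness and universal-property statement of Theorem \ref{thm: unique normal subgyrogroup}.
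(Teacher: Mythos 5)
Your proof is correct. The reverse direction coincides exactly with the paper's: $G'\subseteq\NC{G'}\subseteq N$, then Proposition \ref{prop: G' is smallest with gyrocommutative quotient}. In the forward direction you take a slightly different, and in fact more self-contained, route: you apply Proposition \ref{prop: G' is smallest with gyrocommutative quotient} to get $G'\subseteq N$ and then invoke the minimality clause \eqref{item: normal closure smallest} of Theorem \ref{thm: Existence of normal closure} directly, whereas the paper feeds the canonical projection $\Pi\colon G\to G/N$ into the universal property, item \eqref{item: universal property of NC(G')} of Theorem \ref{thm: unique normal subgyrogroup}, concluding $\NC{G'}\subseteq\ker{\Pi} = N$. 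The two arguments rest on the same underlying facts---the paper's proof of that universal property is itself ``commutators lie in $\ker{\vphi}$, hence $G'\subseteq\ker{\vphi}$, hence $\NC{G'}\subseteq\ker{\vphi}$ by minimality''---so your version essentially inlines that reasoning, specialized to $\vphi = \Pi$, and bypasses Theorem \ref{thm: unique normal subgyrogroup} altogether. What the paper's phrasing buys is a clean illustration of how the universal property gets used; what yours buys is independence from that theorem, needing only the definition of the normal closure and the Proposition. Either way, the content is the same: $\NC{G'}$ is the smallest normal subgyrogroup of $G$ with gyrocommutative quotient.
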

\begin{proof}
Suppose that $G/N$ is gyrocommutative. Then the canonical projection
$\Pi\colon G\to G/N$ fits item \eqref{item: universal property of
NC(G')} of Theorem \ref{thm: unique normal subgyrogroup}. Hence,
$\NC{G'}\subseteq\ker{\Pi} = N$. Conversely, if $\NC{G'}\subseteq
N$, then $G'\subseteq N$ and so $G/N$ is gyrocommutative by
Proposition \ref{prop: G' is smallest with gyrocommutative
quotient}.
\end{proof}

\begin{proposition}\label{prop: K trivial when G is gyrocomm}
$\NC{G'} = \set{0}$ if and only if $G$ is gyrocommutative.
\end{proposition}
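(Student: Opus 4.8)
The plan is to obtain this as a short consequence of the machinery already in place, testing the commutator data against the natural candidate $N = \set{0}$. Before invoking anything, I would record two routine facts: the trivial subgyrogroup $\set{0}$ is normal in $G$ (it is the kernel of the identity automorphism of $G$, and kernels of gyrogroup homomorphisms are normal), and the canonical projection $G\to G/\set{0}$ is an isomorphism, so that $G/\set{0}$ is gyrocommutative precisely when $G$ is.

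With these observations, the cleanest route is to apply Theorem \ref{thm: minimality of normal closure of G'} with $N = \set{0}$: it yields that $G/\set{0}$ is gyrocommutative if and only if $\NC{G'}\subseteq\set{0}$. Since a normal subgyrogroup contains the identity, we always have $0\in\NC{G'}$, so the inclusion $\NC{G'}\subseteq\set{0}$ is equivalent to the equality $\NC{G'} = \set{0}$. Combining this with the remark that $G/\set{0}$ is gyrocommutative exactly when $G$ is, the desired equivalence follows in one line.

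Alternatively, one can argue directly from Theorem \ref{thm: property of commutator}\eqref{item: characterization of gyrocommutative via G'}. For the forward direction, $\NC{G'} = \set{0}$ together with the inclusion $G'\subseteq\NC{G'}$ forces $G' = \set{0}$, whence $G$ is gyrocommutative by that item. For the converse, if $G$ is gyrocommutative then $G' = \set{0}$ by the same item; since $\set{0}$ is itself a normal subgyrogroup, it coincides with its own normal closure (by the remark following Theorem \ref{thm: Existence of normal closure}), so $\NC{G'} = \NC{\set{0}} = \set{0}$.

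I do not anticipate a genuine obstacle here: the statement is essentially a corollary of Theorem \ref{thm: minimality of normal closure of G'} (or of Theorem \ref{thm: property of commutator}). The only points that warrant a sentence of care are the facts that $\set{0}\unlhd G$ and that a normal subgyrogroup equals its own normal closure, both of which are already available from the preceding results.
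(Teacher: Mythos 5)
Your proposal is correct. Your first route is essentially the paper's own proof: the paper handles the converse exactly as you do, by applying Theorem \ref{thm: minimality of normal closure of G'} to $N=\set{0}$, and handles the forward direction by noting that $G\cong G/\NC{G'}$ via the canonical projection and that $G/\NC{G'}$ is gyrocommutative; funnelling both directions through the single biconditional of Theorem \ref{thm: minimality of normal closure of G'} is only a repackaging of that argument. Your alternative route, however, is genuinely different and more elementary: it avoids quotient gyrogroups altogether, obtaining the forward direction from $G'\subseteq\NC{G'}=\set{0}$ together with Theorem \ref{thm: property of commutator} \eqref{item: characterization of gyrocommutative via G'}, and the converse from $G'=\set{0}$ together with the fact that a normal subgyrogroup (here $\set{0}$, which is normal as the kernel of the identity automorphism) equals its own normal closure, so $\NC{G'}=\NC{\set{0}}=\set{0}$. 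What the paper's route buys is continuity with the section's machinery---it exercises the universal property just established for $\NC{G'}$; what your alternative buys is self-containedness, resting only on item \eqref{item: characterization of gyrocommutative via G'} and the minimality clause of Theorem \ref{thm: Existence of normal closure}, and it makes transparent that the proposition is at bottom a statement about $G'$ itself rather than about quotients.
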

\begin{proof}
If $\NC{G'} = \set{0}$, then $G\cong G/\NC{G'}$ via the canonical
projection. Hence, $G$ is gyrocommutative. Conversely, if $G$ is
gyrocommutative, then so is $G/\set{0}$. Hence,
$\NC{G'}\subseteq\set{0}$ by Theorem \ref{thm: minimality of normal
closure of G'}. This implies $\NC{G'}=\set{0}$.
\end{proof}

\par By a {\it subgroup} of a gyrogroup $G$ we mean a subgyrogroup
of $G$ that forms a group under the operation of $G$
\cite[Proposition 3.3]{TSKW2014LTG}. One of the remarkable
consequences of Theorem \ref{thm: unique normal subgyrogroup} is
that the normal closure of $G'$ (and hence $G'$) is a subgroup of
$G$.

\begin{theorem}
The normal closure of $G'$ is a subgroup of $G$.
\end{theorem}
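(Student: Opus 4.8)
The plan is to reduce the assertion to a purely gyration-theoretic statement and then feed in the universal property established in Theorem \ref{thm: unique normal subgyrogroup}. Write $N=\NC{G'}$. By the definition of a subgroup of a gyrogroup recalled above, $N$ is a subgroup exactly when $\oplus$ is associative on $N$. Since the left gyroassociative law (G3) gives $a\oplus(b\oplus c)=(a\oplus b)\oplus\gyr{a,b}{c}$, the left cancellation law shows that associativity of $\oplus$ on $N$ is equivalent to the single condition $\gyr{a,b}{c}=c$ for all $a,b,c\in N$. Thus it suffices to prove that each gyration of $G$ restricts to the identity on $N$ whenever its two arguments are drawn from $N$. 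This restriction is at least well defined: $N$ is normal, hence invariant under every gyroautomorphism of $G$, so $\res{\gyr{a,b}{}}{N}\in\aut{N}$ for $a,b\in N$, reflecting that $N$ is an L-subgyrogroup in the spirit of Theorem \ref{thm: G' is an L subgyrogroup}. The goal is therefore to upgrade ``$\res{\gyr{a,b}{}}{N}$ is an automorphism of $N$'' to ``$\res{\gyr{a,b}{}}{N}=\id{N}$.''

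Next I would bring in the gyrocommutativity of the quotient. By Theorem \ref{thm: unique normal subgyrogroup}, $G/N$ is gyrocommutative and $N$ is the smallest normal subgyrogroup with this property, and I would couple this with the commutator calculus of Theorem \ref{thm: property of commutator}: that $\gyr{a,b}{}$ is trivial on $a\oplus b$ and $b\oplus a$ precisely through the relation $[a,b]=0\Leftrightarrow a\oplus b=\gyr{a,b}{(b\oplus a)}$, and that a gyrogroup is gyrocommutative exactly when all its commutators vanish. Writing $L_x\colon t\mapsto x\oplus t$ for the left translations, (G3) yields $\gyr{a,b}{}=L_{a\oplus b}^{-1}\circ L_a\circ L_b$, so the target identity $\gyr{a,b}{c}=c$ for $a,b,c\in N$ is literally the relation $L_aL_b=L_{a\oplus b}$ on $N$. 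The idea is to propagate this relation along the generators of $N$: every element of $N$ is assembled from commutators $[x,y]$ together with their images under gyroautomorphisms and left translations, so I would argue inductively on such expressions, using that each commutator lies in $N$ and that the quotient relation $a\oplus b\equiv\gyr{a,b}{(b\oplus a)}\pmod N$ governs how gyrations act modulo $N$.

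The main obstacle is precisely the passage from ``$G/N$ is gyrocommutative'' to ``the gyrations of $G$ vanish inside $N$.'' These are not formally equivalent: a gyrocommutative gyrogroup may carry nontrivial gyrations, so gyrocommutativity of the quotient cannot by itself force the internal automorphisms $\res{\gyr{a,b}{}}{N}$ to be trivial, and one cannot merely transport the desired identity through the canonical projection $\Pi\colon G\to G/N$. The genuine work is the internal gyration bookkeeping on $N$ itself—controlling the compositions $L_{a\oplus b}^{-1}L_aL_b$ as $a,b$ range over the commutator generators and their normal conjugates—and it is here that the special status of $N$ as the normal closure of the commutators, rather than that of an arbitrary normal subgyrogroup, must be used decisively. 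I expect this step, rather than the formal reduction or the invocation of the universal property, to be where the real difficulty lies.
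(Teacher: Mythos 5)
Your reduction is sound: by the left gyroassociative law and left cancellation, a subgyrogroup $H\leqslant G$ is a subgroup precisely when $\res{\gyr{a,b}{}}{H}=\id{H}$ for all $a,b\in H$, which is exactly the criterion the paper itself uses for the nuclei in Theorem \ref{thm: l m r nucleus form subgroups}. But that reduction is all you actually prove. The core claim --- that gyrations with both arguments in $N=\NC{G'}$ act trivially on $N$ --- is left as a plan, and you yourself flag it as the ``main obstacle'' without resolving it. Your proposed induction also rests on a premise the paper never establishes: you assume every element of $N$ is ``assembled from commutators $[x,y]$ together with their images under gyroautomorphisms and left translations.'' The normal closure is defined abstractly, as the intersection of all normal subgyrogroups containing $G'$ (Theorem \ref{thm: Existence of normal closure}); no internal generator description of $\NC{A}$ is available in the nonassociative setting, where there is no conjugation to fall back on. So the proposal is a correct first step followed by an unproven program, not a proof.

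The paper sidesteps all of this internal bookkeeping. It invokes Theorem 4.11 of \cite{TFAU2000IDG}: every gyrogroup $G$ possesses a normal subgroup $\Xi$ (a subgyrogroup that is a group under $\oplus$) such that $G/\Xi$ is gyrocommutative. By the minimality property of the normal closure (Theorem \ref{thm: minimality of normal closure of G'}), $\NC{G'}\subseteq\Xi$; and a subgyrogroup contained in a subgroup inherits associativity, hence is itself a subgroup. In other words, the hard content --- producing an associative normal piece with gyrocommutative quotient --- is precisely what your ``internal gyration bookkeeping'' would have to recreate from scratch, and the paper outsources it to the Foguel--Ungar theorem rather than carrying it out by hand. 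If you want to salvage your approach, the realistic route is the machinery behind that theorem (twisted subgroups of the left multiplication group $\lmlt{G}$, as in Section 3 of the paper), not an induction over putative generators of $\NC{G'}$.
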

\begin{proof}
By Theorem 4.11 of \cite{TFAU2000IDG}, $G$ has a normal subgroup
$\Xi$ such that $G/\Xi$ is a gyrocommutative gyrogroup. By Theorem
\ref{thm: minimality of normal closure of G'},
$\NC{G'}\subseteq\Xi$. Since $\Xi$ is a subgroup of $G$, so is
$\NC{G'}$.
\end{proof}

\begin{corollary}\label{cor: commutator subgyrogroup is a subgroup}
The commutator subgyrogroup of $G$ is a subgroup of $G$.
\end{corollary}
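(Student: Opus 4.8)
The goal is to show that the commutator subgyrogroup $G'$ is a subgroup of $G$.

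The plan is to deduce this directly from the immediately preceding theorem, which establishes that the normal closure $\NC{G'}$ is a subgroup of $G$. The key observation is that subgroups are inherited downward by inclusion: if a subgyrogroup $H$ of $G$ forms a group under $\oplus$, and $K$ is any subgyrogroup of $G$ contained in $H$, then $K$ also forms a group under $\oplus$. This is because associativity and the group axioms are properties of the single binary operation $\oplus$, which is simply the restriction from $H$ to $K$; no new structure is introduced by passing to a sub-object.

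Concretely, I would first recall from Theorem \ref{thm: Existence of normal closure}\eqref{item: normal closure inclusion} that $G'\subseteq\NC{G'}$. By the preceding theorem, $\NC{G'}$ is a subgroup of $G$, meaning $(\NC{G'},\oplus)$ is associative and hence an ordinary group. Since $G'$ is a subgyrogroup of $G$ contained in $\NC{G'}$, the operation $\oplus$ restricted to $G'$ agrees with the (associative) operation on $\NC{G'}$. Therefore $\oplus$ is associative on $G'$ as well, and $G'$ together with $\oplus$ satisfies all the group axioms. By the definition of a subgroup of a gyrogroup given in the paper (a subgyrogroup forming a group under $\oplus$, following \cite[Proposition 3.3]{TSKW2014LTG}), we conclude that $G'$ is a subgroup of $G$.

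I do not anticipate a serious obstacle here, as the corollary is essentially a one-line consequence. The only point requiring slight care is verifying that a subgyrogroup of a subgroup is again a subgroup; this rests on the fact that the gyrogroup operation on $G'$ coincides with the restriction of the operation on $\NC{G'}$, so associativity passes automatically to the smaller set. This is where \cite[Proposition 3.3]{TSKW2014LTG}, which characterizes subgroups of gyrogroups, would be invoked to make the inheritance rigorous.
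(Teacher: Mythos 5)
Your proposal is correct and matches the paper's own proof, which likewise deduces the corollary from the inclusion $G'\subseteq\NC{G'}$ and the preceding theorem that $\NC{G'}$ is a subgroup of $G$. You merely spell out the (correct) inheritance argument --- that a subgyrogroup contained in a subgroup inherits associativity and is therefore itself a subgroup --- which the paper leaves implicit.
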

\begin{proof}
The corollary follows from the fact that $G'\subseteq\NC{G'}$.
\end{proof}

\section{Nuclei and the radical of a gyrogroup}
\par Throughout this section, $G$ is an arbitrary gyrogroup.
We follow \cite{TFMKJP2006OTS} in presenting a few normal subgroups
sitting inside a gyrogroup. The main goal of this section is to
prove that the left nucleus and radical of $G$ are normal
subgyrogroups of $G$ that form groups under the gyrogroup operation.
The key idea is as follows. Every gyrogroup can be embedded into its
left multiplication group, and normality of the subgyrogroup under
consideration follows from normality of the corresponding subgroup
of the left multiplication group. This in particular shows a
remarkable connection between groups and gyrogroups.

\par As in loop theory, the \textit{left nucleus}, \textit{middle nucleus}, and \textit{right
nucleus of $G$} are defined, respectively, by
\begin{align}
N_l(G) &= \cset{a\in G}{\forall b, c\in G,\,\,a\oplus(b\oplus c) = (a\oplus b)\oplus c},\notag\\
N_m(G) &= \cset{b\in G}{\forall a, c\in G,\,\,a\oplus(b\oplus c) = (a\oplus b)\oplus c},\notag\\
N_r(G) &= \cset{c\in G}{\forall a, b\in G,\,\,a\oplus(b\oplus c) =
(a\oplus b)\oplus c}.\notag
\end{align}
Since $G$ satisfies the left gyroassociative law and the general
left cancellation law, the left nucleus, middle nucleus, and right
nucleus of $G$ can be restated in terms of gyro-automorphisms as
follows:
\begin{align}
N_l(G) &= \cset{a\in G}{\forall b\in G,\,\,\gyr{a, b}{} = \id{G}},\notag\\
N_m(G) &= \cset{b\in G}{\forall a\in G,\,\,\gyr{a, b}{} = \id{G}},\notag\\
N_r(G) &= \cset{c\in G}{\forall a, b\in G,\,\,\gyr{a, b}{c} =
c}.\notag
\end{align}
By Theorem 2.34 of \cite{AU2008AHG}, $\igyr{a, b}{} = \gyr{b, a}{}$
for all $a, b\in G$. It follows that the left nucleus and middle
nucleus of $G$ are identical.

\begin{theorem}\label{thm: l m r nucleus form subgroups}
The left nucleus, middle nucleus, and right nucleus are
L-subgyrogroups of $G$. Furthermore, they are subgroups of $G$.
\end{theorem}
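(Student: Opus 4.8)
The plan is to treat only $N_l(G)$ and $N_r(G)$, since the paragraph preceding the statement already records that $N_l(G) = N_m(G)$. For each of the two I would verify, in order, the subgyrogroup axioms (that it contains $0$, is closed under $\oplus$ and under $\ominus$, and that the ambient gyrations restrict to automorphisms of it), then the L-subgyrogroup condition $\gyr{a,h}{(H)} = H$ for all $a\in G$ and $h\in H$, and finally that the restricted gyroautomorphisms are trivial, which promotes the subgyrogroup to a group by the observation (recorded after the definition of the commutator) that a gyrogroup with trivial gyroautomorphisms is a group. The theme is that $N_r(G)$ is handled purely by the automorphism property of gyrations, while $N_l(G)$ needs the associativity form of the definition together with left cancellation.

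I would start with $N_r(G)$, which is nearly automatic. Each gyration $\gyr{a,b}{}$ is an automorphism of $G$, so $\gyr{a,b}{0}=0$, and for $c_1,c_2\in N_r(G)$ one has $\gyr{a,b}{(c_1\oplus c_2)} = \gyr{a,b}{c_1}\oplus\gyr{a,b}{c_2} = c_1\oplus c_2$ and $\gyr{a,b}{(\ominus c_1)} = \ominus\gyr{a,b}{c_1} = \ominus c_1$ for all $a,b\in G$; hence $N_r(G)$ contains $0$ and is closed under $\oplus$ and $\ominus$. Since every gyration fixes each element of $N_r(G)$, the map $\gyr{a,h}{}$ restricts to the identity on $N_r(G)$ for every $a\in G$ and $h\in N_r(G)$, which at once yields $\gyr{a,h}{(N_r(G))} = N_r(G)$ and shows the restricted gyroautomorphisms are trivial.

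For $N_l(G)=N_m(G)$ I would work from the associativity form $a\oplus(y\oplus z) = (a\oplus y)\oplus z$. Closure under $\oplus$ follows by repeatedly applying this identity for $a$ and for $b$: expanding $(a\oplus b)\oplus(y\oplus z)$ and shifting the parentheses reduces it to $((a\oplus b)\oplus y)\oplus z$, so $a\oplus b\in N_l(G)$. For closure under $\ominus$, given $a\in N_l(G)$ I would show $\ominus a\in N_l(G)$ by checking that $a\oplus(\ominus a\oplus(y\oplus z)) = y\oplus z = a\oplus((\ominus a\oplus y)\oplus z)$, where each side collapses using the nucleus property of $a$ together with $a\oplus\ominus a=0$, and then cancelling the leading $a$ by the left cancellation law. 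Because $a\in N_l(G)=N_m(G)$ forces $\gyr{a,b}{}=\id{G}$ for every $b\in G$, the gyration-restriction requirement is trivial, and for any $a\in G$ and $h\in N_l(G)$ the middle-nucleus condition gives $\gyr{a,h}{}=\id{G}$, hence $\gyr{a,h}{(N_l(G))}=N_l(G)$.

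Finally, to promote each subgyrogroup to a subgroup I would note that, restricted to $N_l(G)$, every gyration $\gyr{x,y}{}$ with $x\in N_l(G)$ is already $\id{G}$, and restricted to $N_r(G)$ every gyration fixes the subgyrogroup pointwise; in either case the subgyrogroup is a gyrogroup with trivial gyroautomorphisms, and the left gyroassociative law collapses to ordinary associativity, so it is a group. The only genuinely non-formal step is the inversion closure of $N_l(G)$, where the gyroassociative manipulations must be paired with left cancellation; for $N_r(G)$ nothing beyond the automorphism property of gyrations is required, which is why I would dispatch $N_r(G)$ first.
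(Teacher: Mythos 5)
Your proof is correct, and its overall skeleton matches the paper's: both treat $N_m(G)$ via the identity $N_l(G)=N_m(G)$ recorded before the theorem, both obtain the L-subgyrogroup property from the fact that $\gyr{a,h}{}=\id{G}$ whenever $h$ lies in the middle nucleus, and both promote each subgyrogroup to a subgroup by observing that the restricted gyroautomorphisms are trivial, so the left gyroassociative law collapses to associativity. Where you genuinely diverge is in the closure computations for $N_l(G)$. The paper stays entirely in the gyration language: inverse closure comes from the even property $\gyr{\ominus a,\ominus b}{}=\gyr{a,b}{}$ (Theorem 2.34 of Ungar), and closure under $\oplus$ comes from a gyrator-identity computation, combined with the right gyroassociative law, showing directly that $\gyr{a\oplus b,c}{x}=x$ for all $c,x\in G$. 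You instead work with the associativity form of the nucleus, which is the classical loop-theoretic argument: closure under $\oplus$ by re-parenthesizing $(a\oplus b)\oplus(y\oplus z)$ step by step using the nucleus property of $a$ and $b$, and closure under $\ominus$ by computing $a\oplus(\ominus a\oplus(y\oplus z))=y\oplus z=a\oplus((\ominus a\oplus y)\oplus z)$ and left-cancelling $a$. Since both characterizations of the nuclei are stated in the paper just before the theorem, your route is legitimate; it is also more self-contained, needing only the definition, left cancellation, and $a\ominus a=0$, rather than the gyrator identity and the parity of gyrations, and it is exactly the argument that the left nucleus of any loop is a group. The paper's computation, in exchange, produces the gyration identity $\gyr{a\oplus b,c}{}=\id{G}$ explicitly, in keeping with the gyration-based formulation it uses throughout. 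A further small difference in coverage: you write out the $N_r(G)$ case in full (pointwise fixedness under all gyrations makes every verification immediate), whereas the paper dismisses it as ``straightforward''; your treatment confirms that it is.
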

\begin{proof}
Because $\gyr{0, a}{} = \id{G}$ for all $a\in G$, $0\in N_l(G)$. Let
$a\in N_l(G)$ and let $b\in G$. By Theorem 2.34 of \cite{AU2008AHG},
$\gyr{\ominus a, b}{} = \gyr{\ominus a, \ominus(\ominus b)}{} =
\gyr{a, \ominus b}{} = \id{G}$. Hence, $\ominus a$ is in $N_l(G)$.
Let $a, b\in N_l(G)$ and let $c, x\in G$. According to the gyrator
identity \cite[Theorem 2.10]{AU2008AHG}, we compute
\begin{align}
\gyr{a\oplus b, c}{x} &= \ominus((a\oplus b)\oplus c)\oplus((a\oplus
b)\oplus(c\oplus x))\notag\\
{} &= \ominus((a\oplus b)\oplus c)\oplus(a\oplus (b\oplus \gyr{b,
a}{(c\oplus x)}))\notag\\
{} &= \ominus((a\oplus b)\oplus c)\oplus(a\oplus(b\oplus(c\oplus
x)))\notag\\
{} &= \ominus((a\oplus b)\oplus c)\oplus(a\oplus ((b\oplus c)\oplus
x))\notag\\
{} &= \ominus((a\oplus b)\oplus c)\oplus((a\oplus (b\oplus c))\oplus
x)\notag\\
{} &= \ominus((a\oplus b)\oplus c)\oplus(((a\oplus b)\oplus c)\oplus
x)\notag\\
{} &= x.\notag
\end{align}
We have the second equation from the right gyroassociative law; the
third and forth equations since $b\in N_l(G)$; the fifth and sixth
equations since $a\in N_l(G)$; the last \mbox{equation} from the
left cancellation law. Since $x$ is arbitrary, $\gyr{a\oplus b, c}{}
= \id{G}$ and so $a\oplus b\in N_l(G)$. By the subgyrogroup
criterion \cite[Proposition 14]{TSKW2015ITG}, $N_l(G)\leqslant G$.
By definition of $N_l(G)$, $N_l(G)\leqslant_L G$. Since
$\res{\gyr{a, b}{}}{N_l(G)} = \id{N_l(G)}$ for all $a, b\in N_l(G)$,
$N_l(G)$ is a subgroup of $G$. Since $N_m(G) = N_l(G)$, we have
$N_m(G)\leqslant_L G$ and $N_m(G)$ is a subgroup of $G$ as well. The
proof that $N_r(G)$ is an L-subgyrogroup and a subgroup of $G$ is
straightforward.
\end{proof}

\par Let $a$ be an arbitrary element of $G$. Recall that
the \textit{left gyrotranslation by $a$}, $L_a$, is a permutation of
$G$ defined by
$$L_a(x) = a\oplus x,\qquad x\in G.$$
For a given subgyrogroup $H$ of $G$, define $L(H)=\cset{L_a}{a\in
H}$. In the case $H = G$, we have $L(G) = \cset{L_a}{a\in G}$. The
\textit{left multiplication group of $G$}, $\lmlt{G}$, is the
subgroup of the symmetric group on $G$ generated by $L(G)$. In other
words,
$$\lmlt{G} = \gen{L_a\colon a\in G}.$$

\par A subset $X$ of a group $\Gam$ is a \textit{twisted subgroup}
\cite[p. 187]{TFMKJP2006OTS} of $\Gam$ if $1\in X$, $1$ being the
identity element of $\Gam$; $x\in X$ implies $x^{-1}\in X$; and $x,
y\in X$ implies $xyx\in X$.

\begin{theorem}\label{thm: L(G) is a twisted subgroup}
$L(G)$ is a twisted subgroup of $\lmlt{G}$.
\end{theorem}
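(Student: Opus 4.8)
The plan is to verify the three defining properties of a twisted subgroup directly for $X = L(G)$ sitting inside $\Gamma = \lmlt{G}$. Since each $L_a$ with $a \in G$ belongs to $\lmlt{G}$ by the very definition of the left multiplication group, $L(G)$ is genuinely a subset of $\lmlt{G}$, and the task reduces to checking the three axioms: that the identity lies in $L(G)$, that $L(G)$ is closed under inversion, and that $L(G)$ is closed under the twisted product $(f,g)\mapsto fgf$.

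For the identity axiom, I would observe that the left identity property (G1) gives $L_0(x) = 0 \oplus x = x$ for all $x \in G$, so $L_0 = \id{G}$ is the identity of $\lmlt{G}$ and lies in $L(G)$. For closure under inversion, I would use the left cancellation law $\ominus a \oplus (a \oplus x) = x$, which shows $L_{\ominus a} \circ L_a = \id{G}$; since $\lmlt{G}$ is a group, it follows that $L_a^{-1} = L_{\ominus a} \in L(G)$ for every $a \in G$.

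The substantive step is the twisted-closure axiom: given $a, b \in G$, I must show $L_a \circ L_b \circ L_a \in L(G)$. Evaluating on an arbitrary $x \in G$ gives $(L_a \circ L_b \circ L_a)(x) = a \oplus (b \oplus (a \oplus x))$, so the claim amounts to the \emph{left Bol identity} $a \oplus (b \oplus (a \oplus x)) = (a \oplus (b \oplus a)) \oplus x$, which would yield $L_a \circ L_b \circ L_a = L_{a \oplus (b \oplus a)} \in L(G)$. To establish it I would apply the left gyroassociative law (G3) twice: first $b \oplus (a \oplus x) = (b \oplus a) \oplus \gyr{b, a}{x}$, and then $a \oplus ((b \oplus a) \oplus \gyr{b, a}{x}) = (a \oplus (b \oplus a)) \oplus \gyr{a, b \oplus a}{(\gyr{b, a}{x})}$.

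The hard part is therefore to collapse the residual gyration $\gyr{a, b \oplus a}{} \circ \gyr{b, a}{}$ to the identity. I would handle this by combining the gyration inversion law $\igyr{a, b}{} = \gyr{b, a}{}$ (Theorem 2.34 of \cite{AU2008AHG}) with the left loop property (G4): applying inversion and then (G4) gives $\gyr{a, b \oplus a}{} = (\gyr{b \oplus a, a}{})^{-1} = (\gyr{b, a}{})^{-1} = \gyr{a, b}{}$, which is the \emph{right loop property}. Consequently $\gyr{a, b \oplus a}{} \circ \gyr{b, a}{} = \gyr{a, b}{} \circ \gyr{b, a}{} = \id{G}$, so $\gyr{a, b \oplus a}{(\gyr{b, a}{x})} = x$ and the Bol identity holds, giving $L_a \circ L_b \circ L_a = L_{a \oplus (b \oplus a)}$. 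This completes the verification of all three axioms and shows that $L(G)$ is a twisted subgroup of $\lmlt{G}$.
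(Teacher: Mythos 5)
Your proof is correct and takes essentially the same approach as the paper: both reduce the claim to the two facts $L_a^{-1} = L_{\ominus a}$ and that $L_a\circ L_b\circ L_a$ is again a left gyrotranslation, the paper quoting the latter in the form $L_a\circ L_b\circ L_a = L_{(a\oplus b)\boxplus a}$, which agrees with your $L_{a\oplus(b\oplus a)}$ since $(a\oplus b)\boxplus a = a\oplus(b\oplus a)$. Your self-contained derivation of this left Bol identity --- two applications of the left gyroassociative law, followed by collapsing $\gyr{a, b\oplus a}{}\circ\gyr{b,a}{}$ to $\id{G}$ via the right loop property and the gyration inversion law --- is a valid substitute for the paper's citation of the known identity.
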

\begin{proof}
The theorem follows from the fact that $L_a^{-1} = L_{\ominus a}$
and $$L_a\circ L_b\circ L_a = L_{(a\oplus b)\boxplus a}$$ for all
$a, b\in G$. Here, the coaddition $\boxplus$ of $G$ is defined by
$a\boxplus b = a\oplus\gyr{a, \ominus b}{b}$ for all $a, b\in G$.
\end{proof}

\par In light of Theorem \ref{thm: L(G) is a twisted subgroup},
$L(G)$ is a {\it generating} twisted subgroup of $\lmlt{G}$. This
leads to the following theorem.

\begin{theorem}\label{thm: Description of L(G) sharp}
Define
$$
L(G)^\# = \lcap{a\in G}{}L_aL(G).
$$
Then $L(G)^\#$ is a normal subgroup of $\lmlt{G}$ contained in
$L(G)$.
\end{theorem}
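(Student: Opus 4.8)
The plan is to work entirely inside the group $\Gamma=\lmlt{G}$ and to exploit the generating twisted subgroup $L(G)$ furnished by Theorem~\ref{thm: L(G) is a twisted subgroup}. Since $0\in G$ and $L_0=\id{G}$, the set $L(G)$ occurs as the term indexed by $a=0$ in the intersection defining $L(G)^\#$; hence $L(G)^\#\subseteq L(G)$, which disposes of the inclusion at once. The first substantive step is to convert the defining intersection into a one-sided stability condition. Using the twisted axiom $L_a^{-1}=L_{\ominus a}$ together with the fact that $a\mapsto\ominus a$ permutes $G$, I would show that $\sigma\in L(G)^\#$ if and only if $L_a\sigma\in L(G)$ for every $a\in G$; equivalently,
$$L(G)^\#=\cset{\sigma\in\lmlt{G}}{L(G)\,\sigma\subseteq L(G)}.$$
This reformulation is the technical heart of the argument, since it turns membership into a condition visibly closed under composition.

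From this description the monoid structure is routine: $\id{G}\in L(G)^\#$, and if $L(G)\,\sigma\subseteq L(G)$ and $L(G)\,\tau\subseteq L(G)$ then $L(G)(\sigma\tau)=(L(G)\,\sigma)\tau\subseteq L(G)\,\tau\subseteq L(G)$, so that $L(G)^\#$ is a submonoid of $\lmlt{G}$ contained in $L(G)$. The place where I expect the twisted structure to be needed in an essential way is closure under inverses. For $\sigma\in L(G)^\#$ the right translation $\pi\mapsto\pi\sigma$ is injective on $\lmlt{G}$ and, by the displayed characterization, carries $L(G)$ into itself; upgrading this injection $L(G)\to L(G)$ to a surjection is exactly what yields $L(G)\,\sigma=L(G)$ and hence $\sigma^{-1}\in L(G)^\#$. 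When $G$ is finite this upgrade is automatic, because an injective self-map of the finite set $L(G)$ is onto (equivalently, a finite submonoid of a group is a subgroup); in general it is supplied by the twisted subgroup machinery of \cite{TFMKJP2006OTS}, in which the axioms $L_a^{-1}=L_{\ominus a}$ and $L_a\circ L_b\circ L_a=L_{(a\oplus b)\boxplus a}$ force the one-sided stability $L(G)\,\sigma\subseteq L(G)$ to become the two-sided equality $\sigma L(G)=L(G)=L(G)\,\sigma$. I therefore regard this surjectivity step as the main obstacle.

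With the subgroup property in hand, membership in $L(G)^\#$ can be recorded symmetrically as $\sigma L(G)=L(G)=L(G)\,\sigma$. To prove normality it suffices to check invariance under conjugation by the generators $L_t$, $t\in G$, since $L(G)$ generates $\lmlt{G}$ and $L(G)=L(G)^{-1}$, so that conjugation by $L_t^{-1}=L_{\ominus t}$ is again conjugation by a generator. For $\sigma\in L(G)^\#$ and $t\in G$ I would show $L_t\sigma L_t^{-1}\in L(G)^\#$ by reconstructing it from elements already known to lie in $L(G)$ via the twisted product $L_a\circ L_b\circ L_a=L_{(a\oplus b)\boxplus a}$ together with the set equalities $\sigma L(G)=L(G)=L(G)\,\sigma$; this is precisely the normality half of the twisted subgroup theorem of \cite{TFMKJP2006OTS}, which identifies $X^\#$ as the largest normal subgroup of $\gen{X}$ contained in a generating twisted subgroup $X$. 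Assembling the three parts, namely inclusion, the subgroup property, and conjugation invariance, then gives that $L(G)^\#$ is a normal subgroup of $\lmlt{G}$ contained in $L(G)$.
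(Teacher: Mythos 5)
Your proposal is correct and follows essentially the same route as the paper: the paper's entire proof is a citation of Theorem 3.8 of \cite{TFMKJP2006OTS} (the Foguel--Kinyon--Phillips twisted-subgroup theorem), and your argument, after correctly verifying the inclusion $L(G)^\#\subseteq L(G)$, the reformulation $L(G)^\#=\cset{\sigma\in\lmlt{G}}{L(G)\,\sigma\subseteq L(G)}$, and the submonoid property, defers the two genuinely hard steps (closure under inverses for infinite $G$ and conjugation invariance) to exactly that same machinery. Since the paper itself proves nothing beyond invoking that theorem, your partial unpacking plus the citation is, if anything, slightly more detailed than the original.
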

\begin{proof}
The theorem is an application of Theorem 3.8 of
\cite{TFMKJP2006OTS}.
\end{proof}

\begin{theorem}\label{thm: L(G) sharp and L(left nucleus)}
$L(N_l(G))$ is a normal subgroup of $\lmlt{G}$.
\end{theorem}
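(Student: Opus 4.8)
The plan is to identify $L(N_l(G))$ with the normal subgroup $L(G)^\#$ supplied by Theorem~\ref{thm: Description of L(G) sharp}; once this identification is in hand, normality is immediate. So the entire task reduces to proving the set equality $L(N_l(G)) = L(G)^\#$.

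First I would record the computational fact underlying everything: by the left gyroassociative law, $L_{\ominus a}\circ L_c = L_{\ominus a\oplus c}\circ\gyr{\ominus a, c}{}$ for all $a, c\in G$, since $(\ominus a)\oplus(c\oplus x) = (\ominus a\oplus c)\oplus\gyr{\ominus a, c}{x}$ for all $x$. I would then establish the auxiliary claim that a composite $L_d\circ\gamma$, where $\gamma$ is a gyroautomorphism, lies in $L(G)$ only when $\gamma = \id{G}$: evaluating an identity $L_d\circ\gamma = L_e$ at $0$ gives $e = d$ (using that $\gamma$ fixes $0$ and that $0$ is a two-sided identity), and the left cancellation law then forces $\gamma(x) = x$ for all $x$, i.e. $\gamma = \id{G}$.

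Next I would prove the two inclusions. For $L(N_l(G))\subseteq L(G)^\#$, take $c\in N_l(G)$. Since $N_l(G) = N_m(G)$, we have $\gyr{\ominus a, c}{} = \id{G}$ for every $a\in G$, so the identity above collapses to $L_{\ominus a}\circ L_c = L_{\ominus a\oplus c}\in L(G)$; as $L_{\ominus a} = L_a^{-1}$, this says $L_c\in L_a L(G)$ for every $a$, whence $L_c\in\lcap{a\in G}{}L_a L(G) = L(G)^\#$. For the reverse inclusion, let $f\in L(G)^\#$. By Theorem~\ref{thm: Description of L(G) sharp}, $L(G)^\#\subseteq L(G)$, so $f = L_c$ for some $c\in G$. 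Membership of $L_c$ in $L_a L(G)$ for every $a$ means $L_{\ominus a}\circ L_c\in L(G)$ for every $a$; by the auxiliary claim this forces $\gyr{\ominus a, c}{} = \id{G}$ for all $a$, that is, $\gyr{b, c}{} = \id{G}$ for all $b\in G$, which is exactly the condition $c\in N_m(G) = N_l(G)$. Hence $f = L_c\in L(N_l(G))$.

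Having shown $L(N_l(G)) = L(G)^\#$, the conclusion follows at once from Theorem~\ref{thm: Description of L(G) sharp}, which guarantees that $L(G)^\#$ is a normal subgroup of $\lmlt{G}$. I expect the only delicate point to be the auxiliary claim that $L_d\circ\gamma\in L(G)$ forces $\gamma = \id{G}$; this is precisely where one must exploit that gyroautomorphisms fix $0$ together with the left cancellation law, in place of the group-theoretic associativity that is unavailable here.
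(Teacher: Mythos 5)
Your proof is correct, and its skeleton matches the paper's: both reduce the theorem to the set identity $L(N_l(G)) = L(G)^\#$ and then invoke Theorem~\ref{thm: Description of L(G) sharp} for normality. The difference lies in how that identity is obtained. The paper simply cites Theorem 5.7 of \cite{TFMKJP2006OTS}, a result proved there in the general framework of twisted subgroups and left loops; you instead prove the identity from scratch inside gyrogroup theory, using the left gyroassociative law in the form $L_{\ominus a}\circ L_c = L_{\ominus a\oplus c}\circ\gyr{\ominus a, c}{}$, the coincidence $N_l(G) = N_m(G)$, and your auxiliary claim that $L_d\circ\gamma\in L(G)$ forces $\gamma = \id{G}$ when $\gamma$ is a gyroautomorphism --- this last point is essentially the paper's own observation (recorded after Theorem~\ref{thm: properties of L(G)'}) that $L(G)\cap\symz{G} = \set{\id{G}}$. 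Both directions of your argument check out: membership $L_c\in L_aL(G)$ is correctly unwound as $L_{\ominus a}\circ L_c\in L(G)$ via $L_a^{-1} = L_{\ominus a}$; your appeal to the two-sided identity, to $\gamma(0)=0$, and to the left cancellation law in the auxiliary claim is legitimate, as these are standard first facts about gyrogroups; and in the reverse inclusion you rightly use the containment $L(G)^\#\subseteq L(G)$ from Theorem~\ref{thm: Description of L(G) sharp} to write a generic element as $L_c$. What the paper's route buys is brevity and a connection to the loop-theoretic literature; what yours buys is a self-contained argument that makes the equality $L(G)^\# = L(N_l(G))$ transparent at the level of basic gyrogroup identities instead of deferring it to an external reference.
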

\begin{proof}
From Theorem 5.7 of \cite{TFMKJP2006OTS}, we have $L(N_l(G)) =
L(G)^\#$. Hence, $L(N_l(G))$ is a normal subgroup of $\lmlt{G}$ by
Theorem \ref{thm: Description of L(G) sharp}.
\end{proof}

\par Following \cite{TFMKJP2006OTS}, we define
\begin{equation}\label{eqn: L(G)'}
L(G)' = \cset{L_{a_1}\circ L_{a_2}\circ\cdots\circ L_{a_n}}{a_i\in
G\textrm{ and }L_{a_n}\circ L_{a_{n-1}}\circ\cdots\circ L_{a_1} =
\id{G}}.
\end{equation}
Since $L(G)$ is a generating twisted subgroup of $\lmlt{G}$, it
follows from a result of Foguel, Kinyon, and Phillips \cite[p.
189]{TFMKJP2006OTS} that $L(G)'$ is a normal subgroup of $\lmlt{G}$.
In fact, we have the following theorem.

\begin{theorem}\label{thm: properties of L(G)'}
$L(G)'$ is a normal subgroup of $\lmlt{G}$ such that $L(G)'\subseteq
L(G)^\#$.
\end{theorem}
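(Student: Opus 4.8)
The plan is to treat the whole statement as the gyrogroup instance of the Foguel--Kinyon--Phillips theory of generating twisted subgroups, exactly as in the neighbouring results. First I would record the two hypotheses that feed that theory: by Theorem \ref{thm: L(G) is a twisted subgroup}, $L(G)$ is a twisted subgroup of $\lmlt{G}$, and by definition $\lmlt{G} = \gen{L_a\colon a\in G}$, so $L(G)$ also \emph{generates} $\lmlt{G}$. Thus $L(G)$ is a generating twisted subgroup, which is precisely the setting of \cite{TFMKJP2006OTS}. The normality of $L(G)'$ in $\lmlt{G}$ is then the result of Foguel, Kinyon, and Phillips already quoted in the paragraph preceding the statement, so the only genuinely new content is the inclusion $L(G)'\subseteq L(G)^\#$.

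For that inclusion I would work directly from the description $L(G)^\# = \lcap{a\in G}{}L_aL(G)$ of Theorem \ref{thm: Description of L(G) sharp}. Since $L_a^{-1} = L_{\ominus a}$, a permutation $g\in\lmlt{G}$ lies in $L(G)^\#$ if and only if $L_{\ominus a}\circ g\in L(G)$ for every $a\in G$. I would then use the fact that a permutation $f\in\lmlt{G}$ belongs to $L(G)$ exactly when $f = L_{f(0)}$; equivalently, writing $H$ for the stabiliser of $0$, which is the gyration subgroup $\gen{\gyr{a,b}{}\colon a,b\in G}$, the set $L(G)$ is a left transversal of $H$ in $\lmlt{G}$ and $f\in L(G)$ iff its $H$-component is trivial. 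Taking a generator $g = L_{a_1}\circ\cdots\circ L_{a_n}$ of $L(G)'$, so that $L_{a_n}\circ\cdots\circ L_{a_1} = \id{G}$, the task becomes: for each $a\in G$, rewrite $L_{\ominus a}\circ L_{a_1}\circ\cdots\circ L_{a_n}$ in the normal form $L_c\circ\sigma$ with $c = \ominus a\oplus g(0)$ and $\sigma\in H$ a word in the gyroautomorphisms, and to show that the hypothesis $L_{a_n}\circ\cdots\circ L_{a_1} = \id{G}$ forces $\sigma = \id{G}$.

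The hard part will be exactly this bookkeeping of gyrations: repeatedly applying the left gyroassociative law to collapse $L_{a_1}\circ\cdots\circ L_{a_n}$ into a single left gyrotranslation times a word in the $\gyr{a,b}{}$, and then using the vanishing of the reversed product to annihilate that word for every prepended $L_{\ominus a}$ simultaneously. I expect the cleanest way around it is not to compute but to quote the corresponding structural facts from \cite{TFMKJP2006OTS}: for a generating twisted subgroup $T$ of a group $\Gamma$, the subgroup $T^\#$ is the largest normal subgroup of $\Gamma$ contained in $T$, and $T'$ is a normal subgroup of $\Gamma$. Granting these, it suffices to verify the single containment $L(G)'\subseteq L(G)$, after which the maximality of $L(G)^\#$ among normal subgroups of $\lmlt{G}$ inside $L(G)$ yields $L(G)'\subseteq L(G)^\#$ at once. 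The reduction therefore concentrates all the real work in showing that every product $L_{a_1}\circ\cdots\circ L_{a_n}$ whose reverse is $\id{G}$ is itself a left gyrotranslation, which is the gyrogroup shadow of the Foguel--Kinyon--Phillips inclusion $T'\subseteq T^\#$ specialised to $T=L(G)$ and $\Gamma=\lmlt{G}$.
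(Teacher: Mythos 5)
Your overall framework is the paper's: everything rests on the Foguel--Kinyon--Phillips theory of generating twisted subgroups, and your setup (Theorem \ref{thm: L(G) is a twisted subgroup} together with $\lmlt{G}=\gen{L_a\colon a\in G}$) is exactly how the paper enters that theory. The paper's entire proof, however, is a single citation: Proposition 3.10 of \cite{TFMKJP2006OTS} states, for a generating twisted subgroup $T$ of a group $\Gamma$, both that $T'$ is normal in $\Gamma$ and that $T'\subseteq T^\#$; specializing to $T=L(G)$ and $\Gamma=\lmlt{G}$ yields Theorem \ref{thm: properties of L(G)'} in one stroke.

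The structural flaw in your proposal sits precisely where you try to do better than this. The two facts you permit yourself from \cite{TFMKJP2006OTS} --- normality of $T'$, and maximality of $T^\#$ among normal subgroups of $\Gamma$ contained in $T$ (note this is stronger than what the paper's Theorem \ref{thm: Description of L(G) sharp} records, though it is available in the source) --- do not by themselves give $L(G)'\subseteq L(G)^\#$; as you say, they only reduce it to the containment $L(G)'\subseteq L(G)$. But that containment is never established. The gyration bookkeeping is only sketched, and it is not mere bookkeeping: collapsing $L_{a_1}\circ\cdots\circ L_{a_n}$ into the normal form $L_c\circ\sigma$ with $\sigma$ in the gyration group is routine, but showing that the hypothesis $L_{a_n}\circ\cdots\circ L_{a_1}=\id{G}$ forces $\sigma=\id{G}$ is exactly the nontrivial content of the FKP proposition, and it uses the twisted-subgroup identity $xyx\in T$ in an essential way. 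Your closing sentence in fact concedes that what remains is the FKP inclusion $T'\subseteq T^\#$ itself. So the argument either runs in a circle --- reducing the target statement to (a consequence of) the target statement --- or it silently re-invokes Proposition 3.10, in which case the detour through maximality buys nothing and you should simply cite that proposition, as the paper does.
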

\begin{proof}
The theorem follows directly from Proposition 3.10 of
\cite{TFMKJP2006OTS}.
\end{proof}

\par Set $\symz{G} = \cset{\sig\in\sym{G}}{\sig(0) = 0}$.
Note that $L(G)\cap\symz{G} = \set{\id{G}}$. This implies that if
$G$ is a gyrogroup with a nonidentity gyroautomorphism, say $\gyr{a,
b}{}$, then $L(G)$ is a proper twisted subgroup of $\lmlt{G}$. In
fact, $\gyr{a, b}{} = L^{-1}_{a\oplus b}\circ L_a\circ L_b$ belongs
to $\lmlt{G}$, but does not belong to $L(G)$ since otherwise
$\gyr{a, b}{}$ would \mbox{belong} to $L(G)\cap\symz{G} =
\set{\id{G}}$. In this case, $L(G)'$ and $L(G)^\#$ form {\it proper}
normal subgroups of $\lmlt{G}$ for they are contained in $L(G)$.

\par The following proposition provides a sufficient
condition for normality of a subgyrogroup. As an application of this
proposition, we prove that the left nucleus and radical of $G$ are
normal subgyrogroups of $G$.

\begin{proposition}[{\hskip-0.3pt}\cite{TS2015TAG}]\label{prop: sufficient condition to be normal, subgyrogroup}
If $H$ is a subgyrogroup of $G$ such that
\begin{enumerate}
\item\label{item: sufficient condition, L} $\gyr{h, a}{} = \id{G}$ for all $h\in H, a\in G$,
\item\label{item: sufficient condition, invariant under gyr} $\gyr{a, b}{(H)}\subseteq H$ for all $a, b\in G$, and
\item\label{item: sufficient condition, left and right coset} $a\oplus H = H\oplus a$ for all $a\in G$,
\end{enumerate}
then $H$ is a normal subgyrogroup of $G$.
\end{proposition}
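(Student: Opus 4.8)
The plan is to construct the quotient gyrogroup $G/H$ directly and exhibit $H$ as the kernel of the canonical projection; since every kernel of a gyrogroup homomorphism is a normal subgyrogroup of $G$ (as used throughout this section), this will establish that $H\unlhd G$. First I would record the structural consequences of the hypotheses. By condition \eqref{item: sufficient condition, L}, together with the relation $\gyr{a,b}{}^{-1} = \gyr{b,a}{}$, every $h\in H$ satisfies $\gyr{h,a}{} = \gyr{a,h}{} = \id{G}$ for all $a\in G$; hence $H\subseteq N_l(G) = N_m(G)$, so $H$ is a subgroup of $G$ and, in particular, $H\leqslant_L G$. Consequently the left cosets $\set{a\oplus H : a\in G}$ partition $G$, each of cardinality $\abs{H}$, and $a\oplus H = a'\oplus H$ holds precisely when $a' = a\oplus h$ for some $h\in H$. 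I can then try to define an operation on the coset space by $(a\oplus H)\oplus(b\oplus H) := (a\oplus b)\oplus H$.

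The heart of the proof, and the main obstacle, is showing that this operation is well defined, and it is exactly here that all three hypotheses must conspire. I would first isolate the auxiliary fact that $(g\oplus h)\oplus H = g\oplus H$ for every $g\in G$ and $h\in H$: since $\gyr{g,h}{} = \id{G}$, the left gyroassociative law gives $(g\oplus h)\oplus h' = g\oplus(h\oplus h')$ for all $h'\in H$, and $h\oplus h'$ ranges over $H$ because $H$ is a subgroup. Well-definedness then follows in two moves. To replace the right-hand representative, left gyroassociativity yields $a\oplus(b\oplus h) = (a\oplus b)\oplus\gyr{a,b}{h}$, and $\gyr{a,b}{h}\in H$ by condition \eqref{item: sufficient condition, invariant under gyr}, so the auxiliary fact gives $(a\oplus(b\oplus h))\oplus H = (a\oplus b)\oplus H$. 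To replace the left-hand representative I would re-associate $(a\oplus h)\oplus b = a\oplus(h\oplus b)$ using $\gyr{a,h}{} = \id{G}$, then invoke condition \eqref{item: sufficient condition, left and right coset} to write $h\oplus b = b\oplus h'$ with $h'\in H$, and finally apply left gyroassociativity and condition \eqref{item: sufficient condition, invariant under gyr} once more to land in $(a\oplus b)\oplus H$. Combining the two moves shows the coset product is independent of the chosen representatives.

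It then remains to check that $G/H$ is a gyrogroup and that the projection is a homomorphism. The coset $H$ is a left identity and $\ominus a\oplus H$ is a left inverse of $a\oplus H$; the gyroautomorphisms are induced from those of $G$ via $\gyr{a\oplus H, b\oplus H}{(c\oplus H)} := \gyr{a,b}{c}\oplus H$, which is well defined precisely because $\gyr{a,b}{(H)} = H$ (condition \eqref{item: sufficient condition, invariant under gyr}, using $\gyr{b,a}{} = \gyr{a,b}{}^{-1}$ to upgrade the inclusion to equality). The left gyroassociative law and the left loop property for $G/H$ are then inherited from $G$ by applying the projection, paralleling the quotient computations of Theorem 27 of \cite{TSKW2015ITG}. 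Thus $G/H$ is a gyrogroup, the map $\pi\colon a\mapsto a\oplus H$ is a gyrogroup homomorphism, and $\ker{\pi} = \set{a\in G : a\oplus H = H} = H$, so $H\unlhd G$. Finally, I would note that this agrees with the left-multiplication-group philosophy of the section: the same hypotheses give $L_a^{-1}\circ L_h\circ L_a = L_k$ with $k\in H$ determined by $a\oplus k = h\oplus a$ (condition \eqref{item: sufficient condition, left and right coset}), so that $L(H)$ is a normal subgroup of $\lmlt{G}$, the group-level shadow of the normality just established.
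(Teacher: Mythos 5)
The paper itself never proves this proposition: it is imported from \cite{TS2015TAG}, so there is no internal argument to compare against. Your overall strategy is nevertheless the right one for this framework: since normality of a subgyrogroup means precisely that it is the kernel of a gyrogroup homomorphism (this is exactly how normality is used in Theorem \ref{thm: intersection of normal is normal}), constructing $G/H$ and exhibiting $H = \ker{\pi}$ does prove the statement. Your preparatory steps are also correct: condition (1) together with $\igyr{a, b}{} = \gyr{b, a}{}$ gives $H\subseteq N_l(G) = N_m(G)$, so $H$ is a subgroup and an L-subgyrogroup whose left cosets partition $G$; and the auxiliary fact $(g\oplus h)\oplus H = g\oplus H$ plus your two representative-replacement computations (which genuinely use all three hypotheses) correctly show that $(a\oplus H)\oplus(b\oplus H) := (a\oplus b)\oplus H$ is well defined.

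There is, however, a gap at the gyroautomorphism step. You define $\gyr{a\oplus H, b\oplus H}{(c\oplus H)} := \gyr{a, b}{c}\oplus H$ and assert that this is well defined ``precisely because $\gyr{a, b}{(H)} = H$''. That equality disposes only of the third slot: if $c' = c\oplus h$, then $\gyr{a, b}{c'} = \gyr{a, b}{c}\oplus\gyr{a, b}{h}\in\gyr{a, b}{c}\oplus H$. It says nothing about the first two slots: you must also prove that $a'\oplus H = a\oplus H$ and $b'\oplus H = b\oplus H$ force $\gyr{a', b'}{c}\oplus H = \gyr{a, b}{c}\oplus H$, and condition (2) alone does not give this. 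The clean repair uses the gyrator identity (Theorem 2.10 of \cite{AU2008AHG}), $\gyr{a, b}{c} = \ominus(a\oplus b)\oplus(a\oplus(b\oplus c))$. First check that coset inversion is well defined: if $x' = x\oplus h$ with $h\in H$, then $\ominus x' = \gyr{x, h}{(\ominus h\ominus x)} = \ominus h\ominus x\in H\oplus(\ominus x) = \ominus x\oplus H$, using Theorem 2.11 of \cite{AU2008AHG} together with conditions (1) and (3). Then
\begin{equation*}
\gyr{a, b}{c}\oplus H \;=\; \ominus\bigl((a\oplus H)\oplus(b\oplus H)\bigr)\oplus\Bigl((a\oplus H)\oplus\bigl((b\oplus H)\oplus(c\oplus H)\bigr)\Bigr),
\end{equation*}
where the right-hand side involves only the cosets $a\oplus H$, $b\oplus H$, $c\oplus H$, the already-well-defined coset operation, and the well-defined coset inversion; hence the induced gyration is independent of all three representatives. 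With this inserted, your verification of (G1)--(G4) for $G/H$, the fact that $\pi$ is a gyrogroup homomorphism, and $\ker{\pi} = H$ go through exactly as you describe.
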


\begin{lemma}\label{lem: properties of left nucleus}
Let $G$ be a gyrogroup. Then
\begin{enumerate}
    \item $\gyr{a, b}{(N_l(G))} \subseteq N_l(G)$ for all $a, b\in
    G$, and
    \item $N_l(G)\oplus a = a\oplus N_l(G)$ for all $a\in G$.
\end{enumerate}
\end{lemma}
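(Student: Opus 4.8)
The plan is to establish the two assertions by separate routes: the first is the statement that the left nucleus is invariant under automorphisms, and the second is a coset identity that I would deduce from the normality of $L(N_l(G))$ in $\lmlt{G}$ already recorded in Theorem \ref{thm: L(G) sharp and L(left nucleus)}. Throughout I would use the fact, noted just before Theorem \ref{thm: l m r nucleus form subgroups}, that $N_l(G) = N_m(G)$, so that for $n\in N_l(G)$ one has both $\gyr{n, x}{} = \id{G}$ and $\gyr{x, n}{} = \id{G}$ for every $x\in G$; the two slots of the gyration are what the two parts respectively exploit.

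For part (1), I would first observe that every gyroautomorphism $\gyr{a, b}{}$ lies in $\aut{G}$ by axiom (G3), so it suffices to prove the stronger claim that $\tau(N_l(G))\subseteq N_l(G)$ for an arbitrary $\tau\in\aut{G}$. The key tool is the identity $\tau(\gyr{x, y}{z}) = \gyr{\tau(x), \tau(y)}{\tau(z)}$, valid for any gyrogroup homomorphism (Proposition 23 of \cite{TSKW2015ITG}), which, on replacing $z$ by $\tau^{-1}(z)$, rewrites as $\gyr{\tau(x), \tau(y)}{} = \tau\circ\gyr{x, y}{}\circ\tau^{-1}$. Given $n\in N_l(G)$ and an arbitrary $c\in G$, I would set $c' = \tau^{-1}(c)$ and compute $\gyr{\tau(n), c}{} = \gyr{\tau(n), \tau(c')}{} = \tau\circ\gyr{n, c'}{}\circ\tau^{-1} = \tau\circ\id{G}\circ\tau^{-1} = \id{G}$, the third equality using $n\in N_l(G)$. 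Since $c$ is arbitrary, $\tau(n)\in N_l(G)$, and specializing $\tau = \gyr{a, b}{}$ gives the desired inclusion.

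For part (2), I would invoke Theorem \ref{thm: L(G) sharp and L(left nucleus)}, which tells us that $L(N_l(G))$ is normal in $\lmlt{G}$; in particular $L_a\,L(N_l(G)) = L(N_l(G))\,L_a$ as subsets of $\lmlt{G}$ for every $a\in G$. The crucial point — and the step that lets me descend from this group-level coset identity back to the gyrogroup $G$ — is that for $n\in N_l(G)$ the composite $L_a\circ L_n$ is again a left gyrotranslation: by the left gyroassociative law $(L_a\circ L_n)(x) = a\oplus(n\oplus x) = (a\oplus n)\oplus\gyr{a, n}{x}$, and $\gyr{a, n}{} = \id{G}$ since $n\in N_m(G)$, so $L_a\circ L_n = L_{a\oplus n}$. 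Symmetrically $L_m\circ L_a = L_{m\oplus a}$ for $m\in N_l(G)$, this time using $\gyr{m, a}{} = \id{G}$. Hence $L_a\,L(N_l(G)) = \cset{L_x}{x\in a\oplus N_l(G)}$ and $L(N_l(G))\,L_a = \cset{L_x}{x\in N_l(G)\oplus a}$; because $x\mapsto L_x$ is injective (indeed $L_x(0) = x\oplus 0 = x$), the equality of these two sets of translations forces $a\oplus N_l(G) = N_l(G)\oplus a$.

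The main obstacle, and the place where the nucleus hypothesis does all the work, is precisely this passage between the two levels in part (2): a generic product $L_a\circ L_n$ in $\lmlt{G}$ need not be a left gyrotranslation, so the normality of $L(N_l(G))$ by itself says nothing about cosets in $G$. It is the vanishing of $\gyr{a, n}{}$ and $\gyr{m, a}{}$ — available only because elements of $N_l(G) = N_m(G)$ are trivial in \emph{both} gyration slots — that collapses these products into genuine translations $L_{a\oplus n}$ and $L_{m\oplus a}$, after which injectivity of $L$ closes the argument. I would also confirm at the outset that $0$ is a two-sided identity, so that $L_x(0) = x$ and the map $x\mapsto L_x$ is injective.
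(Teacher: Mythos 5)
Your proof is correct, and it departs from the paper's in both parts, in instructive ways. For part (1) the paper stays inside the left multiplication group: it invokes the commutation relation $L_{\gyr{a, b}{n}} = \gyr{a, b}{}\circ L_n\circ\igyr{a, b}{}$ together with $\gyr{a, b}{}\in\lmlt{G}$ and normality of $L(N_l(G))$ in $\lmlt{G}$ (Theorem~\ref{thm: L(G) sharp and L(left nucleus)}) to conclude that $L_{\gyr{a, b}{n}}\in L(N_l(G))$, hence $\gyr{a, b}{n}\in N_l(G)$. Your argument is more elementary and more general: using only the homomorphism identity of Proposition 23 of \cite{TSKW2015ITG}, rewritten as $\gyr{\tau(x), \tau(y)}{} = \tau\circ\gyr{x, y}{}\circ\tau^{-1}$, you show the defining condition of $N_l(G)$ is preserved by \emph{every} $\tau\in\aut{G}$, with gyroautomorphisms as a special case; this needs no normality input at all and parallels the automorphism-invariance statement for $G'$ in Theorem~\ref{thm: property of commutator}. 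For part (2) you and the paper rely on the same key fact, $L(N_l(G))\unlhd\lmlt{G}$, but deploy it differently: the paper fixes $n\in N_l(G)$, sets $x = \ominus a\oplus(n\oplus a)$, and verifies through a chain of identities that $L_x = L_a^{-1}\circ L_n\circ L_a\in L(N_l(G))$, giving $N_l(G)\oplus a\subseteq a\oplus N_l(G)$, and then runs a second, symmetric computation (via Lemma 2.19 of \cite{AU2008AHG}) for the reverse inclusion; you instead convert the single coset equality $L_aL(N_l(G)) = L(N_l(G))L_a$ into $a\oplus N_l(G) = N_l(G)\oplus a$ in one stroke, by using $N_l(G) = N_m(G)$ to collapse $L_a\circ L_n$ and $L_m\circ L_a$ into the genuine translations $L_{a\oplus n}$ and $L_{m\oplus a}$, and then appealing to injectivity of $x\mapsto L_x$. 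Your version is shorter, obtains both inclusions simultaneously, and isolates exactly where the nucleus hypothesis is needed; it also adapts verbatim to the radical, since $L(\rad{G}) = L(G)'\unlhd\lmlt{G}$ (Theorem~\ref{thm: properties of L(G)'}) and $\rad{G}\subseteq N_l(G)$. What the paper's element-wise computation buys is the explicit conjugation formula $L_{\ominus a\oplus(n\oplus a)} = L_a^{-1}\circ L_n\circ L_a$, which is precisely what the remark following Theorem~\ref{thm: radical is a subgroup} points back to. One small quibble: part (1) uses only the first gyration slot (the $N_l$ condition), while part (2) genuinely needs both slots, so your opening claim that the two parts exploit the two slots ``respectively'' is slightly inaccurate, though this has no effect on the correctness of either argument.
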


\begin{proof}
(1) Set $N = N_l(G)$ and let $n$ be an arbitrary element of $N$. Let
$a, b\in G$. \mbox{According} to the commutation relation
\cite[Equation (14)]{TSKW2015ITG}, we have $$L_{\gyr{a, b}{n}} =
\gyr{a, b}{}\circ L_n\circ\igyr{a, b}{}.$$ Since $\gyr{a,
b}{}\in\lmlt{G}$ and $L(N)\unlhd \lmlt{G}$, it follows that
$L_{\gyr{a, b}{n}}$ belongs to $L(N)$. Hence, $L_{\gyr{a, b}{n}} =
L_{\tilde{n}}$ for some $\tilde{n}\in N$, which implies $\gyr{a,
b}{n} = \tilde{n}\in N$. Since $n$ is arbitrary, we obtain $\gyr{a,
b}{(N)}\subseteq N$.

\par (2) Let $a\in G$ and let $n\in N$. By the left cancellation law,
$x = \ominus a\oplus(n\oplus a)$ is such that $n\oplus a = a\oplus
x$. We compute
\begin{align}
L_x &= L_{\ominus a\oplus (n\oplus a)}\notag\\
{} &= L_{(\ominus a\oplus n)\oplus a}\notag\\
{} &= L_{\ominus a\oplus n}\circ L_a\circ\igyr{\ominus a\oplus n,
a}{}\notag\\
{} &= L_{\ominus a\oplus n}\circ L_a\circ\igyr{\ominus a\oplus n,
a\oplus(\ominus a\oplus n)}{}\notag\\
{} &= L_{\ominus a\oplus n}\circ L_a\circ\igyr{\ominus a\oplus n,
n}{}\notag\\
{} &= L_{\ominus a\oplus n}\circ L_a\notag\\
{} &= L_{\ominus a}\circ L_n\circ \igyr{\ominus a, n}{}\circ
L_a\notag\\
{} &= L_a^{-1}\circ L_n\circ L_a.\notag
\end{align}
We obtain the second equation since $n\in N = N_m(G)$; the third and
seventh equations from the identity $L_{a\oplus b} = L_a\circ L_b
\circ \igyr{a, b}{}$; the forth equation from the right loop
property; the sixth and last equations since $n\in N$. Since
$L(N)\unlhd\lmlt{G}$, we have $L_x\in L(N)$, which implies $x\in N$.
Thus, $N\oplus a\subseteq a\oplus N$.

\par From Lemma 2.19 of \cite{AU2008AHG}, we can let $y\in G$
be such that $a\oplus n = y\oplus a$. To conclude that $a\oplus
N\subseteq N\oplus a$, we have to show that $y$ belongs to $N$. In
fact, one obtains similarly that $L_n = L_a^{-1}\circ L_y\circ L_a$,
which implies $L_y = L_a\circ L_n\circ L_a^{-1}\in L(N)$. Hence,
$y\in N$, as desired.
\end{proof}

\begin{theorem}\label{thm: left nucleus is a normal subgroup}
The left nucleus of $G$ is a normal subgroup of $G$.
\end{theorem}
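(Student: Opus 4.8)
The plan is to apply Proposition \ref{prop: sufficient condition to be normal, subgyrogroup} to the subgyrogroup $H = N_l(G)$ by verifying its three hypotheses in turn, and then to combine the resulting normality with the fact, already established in Theorem \ref{thm: l m r nucleus form subgroups}, that $N_l(G)$ forms a subgroup of $G$.

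First I would check condition \eqref{item: sufficient condition, L}, namely that $\gyr{h, a}{} = \id{G}$ for all $h\in N_l(G)$ and $a\in G$. This is immediate from the definition of the left nucleus: the membership $h\in N_l(G)$ means precisely that $\gyr{h, b}{} = \id{G}$ for every $b\in G$, so in particular $\gyr{h, a}{} = \id{G}$. Condition \eqref{item: sufficient condition, invariant under gyr}, that $\gyr{a, b}{(N_l(G))}\subseteq N_l(G)$ for all $a, b\in G$, is exactly part (1) of Lemma \ref{lem: properties of left nucleus}, and condition \eqref{item: sufficient condition, left and right coset}, that $a\oplus N_l(G) = N_l(G)\oplus a$ for all $a\in G$, is exactly part (2) of the same lemma.

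With all three hypotheses verified, Proposition \ref{prop: sufficient condition to be normal, subgyrogroup} yields that $N_l(G)$ is a normal subgyrogroup of $G$. Since Theorem \ref{thm: l m r nucleus form subgroups} shows that $N_l(G)$ is moreover a subgroup of $G$, we conclude that $N_l(G)$ is a normal subgroup of $G$, as claimed.

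I do not anticipate a genuine obstacle at this stage, because the substantive work has already been carried out in Lemma \ref{lem: properties of left nucleus}, whose proof transfers the normality of $L(N_l(G))$ in $\lmlt{G}$ (Theorem \ref{thm: L(G) sharp and L(left nucleus)}) back to the gyrogroup through the left-translation correspondence. The only point requiring care is to read condition \eqref{item: sufficient condition, L} correctly as a special case of the defining property of the left nucleus, rather than to confuse the left-nuclear condition $\gyr{h, b}{} = \id{G}$ with the two-sided requirement appearing in $N_m(G)$.
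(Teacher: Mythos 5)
Your proposal is correct and follows exactly the paper's own argument: the paper likewise deduces the theorem from Theorem \ref{thm: l m r nucleus form subgroups}, Proposition \ref{prop: sufficient condition to be normal, subgyrogroup}, the defining property of $N_l(G)$ (which gives condition \eqref{item: sufficient condition, L}), and Lemma \ref{lem: properties of left nucleus} (which gives conditions \eqref{item: sufficient condition, invariant under gyr} and \eqref{item: sufficient condition, left and right coset}). Your write-up merely makes explicit the verification that the paper leaves implicit.
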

\begin{proof}
The theorem follows immediately from Theorem \ref{thm: l m r nucleus
form subgroups}, Proposition \ref{prop: sufficient condition to be
normal, subgyrogroup}, the de-fining property of $N_l(G)$, and Lemma
\ref{lem: properties of left nucleus}.
\end{proof}

\begin{corollary}
The middle nucleus of $G$ is a normal subgroup of $G$.
\end{corollary}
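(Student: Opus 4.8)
The plan is to reduce the statement about the middle nucleus to the already-established result for the left nucleus, rather than to repeat the entire normality argument. The crucial observation, recorded just before Theorem \ref{thm: l m r nucleus form subgroups}, is that the left nucleus and middle nucleus of $G$ coincide, that is, $N_m(G) = N_l(G)$. This identity follows from the relation $\igyr{a, b}{} = \gyr{b, a}{}$ (Theorem 2.34 of \cite{AU2008AHG}): since the inverse of the identity automorphism is again the identity, the condition $\gyr{a, b}{} = \id{G}$ for all $b\in G$ holds precisely when $\gyr{b, a}{} = \id{G}$ for all $b\in G$, so the defining conditions for membership in $N_l(G)$ and in $N_m(G)$ are equivalent.

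Granting this equality, the corollary is immediate. By Theorem \ref{thm: left nucleus is a normal subgroup}, $N_l(G)$ is a normal subgroup of $G$, and substituting $N_m(G) = N_l(G)$ yields that $N_m(G)$ is a normal subgroup of $G$ as well.

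There is no genuine obstacle here: all of the substantive work has already been carried out for $N_l(G)$. Specifically, the subgroup property and L-subgyrogroup property are supplied by Theorem \ref{thm: l m r nucleus form subgroups}, while invariance under the gyroautomorphisms and the coset condition $a\oplus N_l(G) = N_l(G)\oplus a$ needed to invoke Proposition \ref{prop: sufficient condition to be normal, subgyrogroup} are furnished by Lemma \ref{lem: properties of left nucleus} and then assembled in Theorem \ref{thm: left nucleus is a normal subgroup}. The only point requiring care is to cite the equality $N_m(G) = N_l(G)$ explicitly at the outset, since otherwise one would be tempted to re-verify conditions \eqref{item: sufficient condition, invariant under gyr} and \eqref{item: sufficient condition, left and right coset} of Proposition \ref{prop: sufficient condition to be normal, subgyrogroup} directly for the middle nucleus, which would merely duplicate the computations already performed in Lemma \ref{lem: properties of left nucleus}.
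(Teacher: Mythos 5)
Your proposal is correct and matches the paper's own proof exactly: the paper likewise deduces the corollary in one line from the equality $N_m(G) = N_l(G)$ (justified earlier via $\igyr{a, b}{} = \gyr{b, a}{}$) together with Theorem \ref{thm: left nucleus is a normal subgroup}. Your additional remarks spelling out why the two nuclei coincide are accurate and consistent with the paper's discussion preceding Theorem \ref{thm: l m r nucleus form subgroups}.
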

\begin{proof}
This is because the left nucleus and middle nucleus of $G$ are the
same.
\end{proof}

\par Following \cite{TFMKJP2006OTS}, the \textit{radical of $G$},
denoted by $\rad{G}$, is defined by
\begin{equation}
\rad{G} = \cset{a\in G}{L_a\in L(G)'}.
\end{equation}

\begin{theorem}\label{thm: radical is a subgroup}
The radical of $G$ is a subgroup of $G$ contained in the left
nucleus of $G$.
\end{theorem}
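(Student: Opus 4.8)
The plan is to prove the inclusion $\rad{G}\subseteq N_l(G)$ \emph{first}, since this is the key that makes the rest routine. Suppose $a\in\rad{G}$, so that $L_a\in L(G)'$. Combining $L(G)'\subseteq L(G)^\#$ from Theorem \ref{thm: properties of L(G)'} with the equality $L(G)^\# = L(N_l(G))$ (established in the proof of Theorem \ref{thm: L(G) sharp and L(left nucleus)}), we obtain $L_a\in L(N_l(G))$. Hence $L_a = L_n$ for some $n\in N_l(G)$, and evaluating both permutations at the identity, using that $0$ is a two-sided identity of $G$, gives $a = a\oplus 0 = L_a(0) = L_n(0) = n\oplus 0 = n$. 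Thus $a = n\in N_l(G)$, proving $\rad{G}\subseteq N_l(G)$.

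Next I would show that $\rad{G}$ is a subgyrogroup by checking the subgyrogroup criterion \cite[Proposition 14]{TSKW2015ITG}. Since $L_0 = \id{G}$ and $L(G)'$ is a subgroup of $\lmlt{G}$ (Theorem \ref{thm: properties of L(G)'}), we have $L_0\in L(G)'$, so $0\in\rad{G}$ and $\rad{G}$ is nonempty. If $a\in\rad{G}$, then $L_{\ominus a} = L_a^{-1}\in L(G)'$ because $L(G)'$ is closed under inverses, hence $\ominus a\in\rad{G}$.

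The one delicate point is closure under $\oplus$. Given $a, b\in\rad{G}$ we directly obtain $L_a\circ L_b\in L(G)'$, but what is required is $L_{a\oplus b}\in L(G)'$, and in general $L_{a\oplus b} = L_a\circ L_b\circ\igyr{a, b}{}$ differs from $L_a\circ L_b$ by a gyroautomorphism factor. This is precisely where the inclusion proved above is deployed: since $a, b\in\rad{G}\subseteq N_l(G)$, the defining property of the left nucleus forces $\gyr{a, b}{} = \id{G}$ and $\gyr{b, a}{} = \id{G}$, so that $\igyr{a, b}{} = \gyr{b, a}{} = \id{G}$. Consequently $L_{a\oplus b} = L_a\circ L_b\in L(G)'$ and $a\oplus b\in\rad{G}$, so $\rad{G}$ is a subgyrogroup of $G$.

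Finally, to pass from subgyrogroup to subgroup, I would note that $\rad{G}$ is a subgyrogroup contained in $N_l(G)$, which is itself a subgroup of $G$ by Theorem \ref{thm: l m r nucleus form subgroups}. Because every gyroautomorphism $\gyr{a, b}{}$ with $a, b\in N_l(G)$ restricts to $\id{G}$, the operation $\oplus$ is associative throughout $N_l(G)$, and hence on $\rad{G}$; together with closure and inverses this makes $\rad{G}$ a group, i.e.\ a subgroup of $G$. The main obstacle throughout is the failure of $L_a\circ L_b$ to coincide with $L_{a\oplus b}$; the argument is arranged so that $\rad{G}\subseteq N_l(G)$ is secured up front and then used to annihilate the offending gyroautomorphism.
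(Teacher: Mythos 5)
Your proof is correct and takes essentially the same route as the paper's: first $\rad{G}\subseteq N_l(G)$ via $L(G)'\subseteq L(G)^{\#} = L(N_l(G))$, then inverses from $L(G)'$ being a subgroup of $\lmlt{G}$, closure from $L_{a\oplus b} = L_a\circ L_b\circ\igyr{a,b}{} = L_a\circ L_b$ once the gyroautomorphisms are killed by the nucleus inclusion, and finally the group structure inherited from the subgroup $N_l(G)$. Your extra details (evaluating $L_a = L_n$ at $0$, explicitly checking $0\in\rad{G}$, and spelling out why a subgyrogroup inside a subgroup is itself a subgroup) are fine but only make explicit what the paper leaves implicit.
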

\begin{proof}
First, we prove that $\rad{G}\subseteq N_l(G)$. Let $a\in\rad{G}$.
Then $L_a\in L(G)'$. By Theorem \ref{thm: properties of L(G)'},
$L(G)'\subseteq L(G)^{\#}$ and by Theorem 5.7 of
\cite{TFMKJP2006OTS}, $L(G)^{\#} = L(N_l(G))$. It follows that
$L_a\in L(N_l(G))$, which implies $a\in N_l(G)$.
\par Let $a\in \rad{G}$. Then $L_{\ominus a} = L_a^{-1}\in L(G)'$
for $L(G)'\leqslant\lmlt{G}$. Hence, $\ominus a\in\rad{G}$. Let $a,
b\in\rad{G}$. Since $\rad{G}\subseteq N_l(G)$, $\gyr{a, b}{} =
\id{G}$. Thus, $$L_{a\oplus b} = L_a\circ L_b\circ\igyr{a, b}{} =
L_a\circ L_b$$ belongs to $L(G)'$. This proves $a\oplus b\in
\rad{G}$ and by the subgyrogroup criterion, $\rad{G}\leqslant G$.
Since $N_l(G)$ is a subgroup of $G$, so is $\rad{G}$.
\end{proof}

\begin{lemma}\label{lem: properties of radical}
Let $G$ be a gyrogroup. Then
\begin{enumerate}
    \item $\gyr{a, b}{(\rad{G})} \subseteq \rad{G}$ for all $a, b\in
    G$, and
    \item $\rad{G}\oplus a = a\oplus \rad{G}$ for all $a\in G$.
\end{enumerate}
\end{lemma}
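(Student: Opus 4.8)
The plan is to run exactly the argument that establishes Lemma~\ref{lem: properties of left nucleus}, transporting both statements into the left multiplication group $\lmlt{G}$ and exploiting the normality of $L(G)'$ there. The two facts I would lean on are: $L(G)'\unlhd\lmlt{G}$, from Theorem~\ref{thm: properties of L(G)'}; and, from Theorem~\ref{thm: radical is a subgroup}, that $\rad{G}\subseteq N_l(G)$, so that every gyroautomorphism of the form $\gyr{r,b}{}$ or $\gyr{b,r}{}$ with $r\in\rad{G}$ collapses to $\id{G}$. By definition $r\in\rad{G}$ precisely when $L_r\in L(G)'$, so in each part it suffices to exhibit the relevant left gyrotranslation as a conjugate of some $L_r$ by an element of $\lmlt{G}$ and then invoke normality.

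For part~(1), fix $r\in\rad{G}$ and $a,b\in G$. I would apply the commutation relation \cite[Equation~(14)]{TSKW2015ITG} to write $L_{\gyr{a,b}{r}}=\gyr{a,b}{}\circ L_r\circ\igyr{a,b}{}$. Since $\gyr{a,b}{}=L_{a\oplus b}^{-1}\circ L_a\circ L_b$ lies in $\lmlt{G}$ and $L_r\in L(G)'$, the right-hand side is a conjugate of an element of the normal subgroup $L(G)'$, hence lies in $L(G)'$. Therefore $L_{\gyr{a,b}{r}}\in L(G)'$, which is exactly the assertion $\gyr{a,b}{r}\in\rad{G}$; letting $r$ range over $\rad{G}$ yields $\gyr{a,b}{(\rad{G})}\subseteq\rad{G}$.

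For part~(2), fix $a\in G$ and $r\in\rad{G}$. By the left cancellation law there is a unique $x$ with $r\oplus a=a\oplus x$, namely $x=\ominus a\oplus(r\oplus a)$, and the goal is to show $x\in\rad{G}$. I would reproduce verbatim the chain of $L$-identities from the proof of Lemma~\ref{lem: properties of left nucleus}, which rests on the product rule $L_{a\oplus b}=L_a\circ L_b\circ\igyr{a,b}{}$, the right loop property, and the fact that $r\in N_l(G)=N_m(G)$ annihilates the intervening gyroautomorphisms; this collapses $L_x$ to $L_a^{-1}\circ L_r\circ L_a$. Normality of $L(G)'$ then forces $L_x\in L(G)'$, i.e.\ $x\in\rad{G}$, giving $\rad{G}\oplus a\subseteq a\oplus\rad{G}$. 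The reverse inclusion is symmetric: using Lemma~2.19 of \cite{AU2008AHG} choose $y$ with $a\oplus r=y\oplus a$, derive $L_r=L_a^{-1}\circ L_y\circ L_a$ in the same way, solve for $L_y=L_a\circ L_r\circ L_a^{-1}\in L(G)'$, and conclude $y\in\rad{G}$.

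The one place demanding genuine care is the string of left-multiplication manipulations in part~(2); everywhere else the proof is pure conjugation bookkeeping. But this step is already insulated by the inclusion $\rad{G}\subseteq N_l(G)$: because each gyroautomorphism attached to $r$ is the identity, the simplification is identical to the one already carried out for $N_l(G)$, so I expect no new obstacle beyond transcribing that computation with $L(N_l(G))\unlhd\lmlt{G}$ replaced by $L(G)'\unlhd\lmlt{G}$.
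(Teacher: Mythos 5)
Your proposal is correct and is exactly the paper's argument: the paper's own proof just says to repeat the proof of Lemma~\ref{lem: properties of left nucleus} ``with appropriate modifications,'' and you have correctly identified those modifications --- replacing normality of $L(N_l(G))$ by normality of $L(G)'$ (Theorem~\ref{thm: properties of L(G)'}), using the inclusion $\rad{G}\subseteq N_l(G)=N_m(G)$ from Theorem~\ref{thm: radical is a subgroup} to collapse the intervening gyroautomorphisms, and using the definition $\rad{G}=\cset{a\in G}{L_a\in L(G)'}$ to translate membership of the conjugated translations back into membership in $\rad{G}$.
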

\begin{proof}
The proof of this lemma follows the same steps as in the proof of
Lemma \ref{lem: properties of left nucleus} with appropriate
modifications.
\end{proof}

\begin{theorem}
The radical of $G$ is a normal subgroup of $G$.
\end{theorem}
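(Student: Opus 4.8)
The plan is to apply Proposition \ref{prop: sufficient condition to be normal, subgyrogroup} to the subgyrogroup $H = \rad{G}$, exactly mirroring the proof that the left nucleus is normal (Theorem \ref{thm: left nucleus is a normal subgroup}). All the substantive work has already been packaged into the preceding results, so the argument reduces to verifying the three hypotheses of that proposition. Since Theorem \ref{thm: radical is a subgroup} already guarantees that $\rad{G}$ is a subgroup of $G$, establishing normality as a subgyrogroup will immediately upgrade to normality as a subgroup.

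First I would check condition \eqref{item: sufficient condition, L}, namely that $\gyr{h, a}{} = \id{G}$ for all $h \in \rad{G}$ and $a \in G$. This is the step where the inclusion $\rad{G} \subseteq N_l(G)$ from Theorem \ref{thm: radical is a subgroup} does the real work: every element of the radical lies in the left nucleus, and by the defining property of $N_l(G)$ such an element $h$ satisfies $\gyr{h, a}{} = \id{G}$ for every $a \in G$. Next, conditions \eqref{item: sufficient condition, invariant under gyr} and \eqref{item: sufficient condition, left and right coset}, namely $\gyr{a, b}{(\rad{G})} \subseteq \rad{G}$ and $a \oplus \rad{G} = \rad{G} \oplus a$ for all $a, b \in G$, are precisely parts (1) and (2) of Lemma \ref{lem: properties of radical}. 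With all three hypotheses in hand, Proposition \ref{prop: sufficient condition to be normal, subgyrogroup} yields that $\rad{G}$ is a normal subgyrogroup of $G$, and since it is also a subgroup, it is a normal subgroup.

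I do not expect a genuine obstacle at this stage, since the delicate computations—tracking $L_x$ through the left multiplication group and using $L(N_l(G)) \unlhd \lmlt{G}$—were already absorbed into Lemma \ref{lem: properties of radical}, whose proof parallels that of Lemma \ref{lem: properties of left nucleus}. If anything is the conceptual crux, it is the containment $\rad{G} \subseteq N_l(G)$, because that is what licenses the nuclear machinery (in particular condition \eqref{item: sufficient condition, L}) to transfer verbatim from the left nucleus to the radical; once that containment is secured, the theorem follows immediately by assembly.
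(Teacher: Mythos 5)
Your proposal is correct and follows exactly the paper's own argument: the paper proves this theorem by citing precisely the same three ingredients---Proposition \ref{prop: sufficient condition to be normal, subgyrogroup}, Theorem \ref{thm: radical is a subgroup} (which supplies both the subgroup structure and the containment $\rad{G}\subseteq N_l(G)$ needed for condition \eqref{item: sufficient condition, L}), and Lemma \ref{lem: properties of radical} (which supplies conditions \eqref{item: sufficient condition, invariant under gyr} and \eqref{item: sufficient condition, left and right coset}). You have simply made explicit the assembly that the paper leaves to the reader.
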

\begin{proof}
The theorem follows directly from Proposition \ref{prop: sufficient
condition to be normal, subgyrogroup}, Theorem \ref{thm: radical is
a subgroup}, and Lemma \ref{lem: properties of radical}.
\end{proof}

\par Note that a gyrogroup $G$ is a group if and only if
$G$ equals its left nucleus. Hence, if $G$ is a gyrogroup that is
not a group, then $N_l(G)$ and $\rad{G}$ are {\it proper} normal
subgroups of $G$. Note also that normality of $N_l(G)$ and $\rad{G}$
in $G$ follows from normality of $L(N_l(G))$ and $L(L(G)')$ in the
left multiplication group of $G$, see the proof of Lemma \ref{lem:
properties of left nucleus}.

\section{Subgyrogroups of prime index}
\par Motivated by the study of subgroups of prime index in
\cite{TL2004OSP}, we study subgyrogroups of prime index.
Specifically, we are going to prove a gyrogroup version of the
following well-known result in abstract algebra: if $\Xi$ is a
subgroup of a finite group $\Gam$ such that the index
$[\Gam\colon\Xi]$ is the smallest prime dividing the order of
$\Gam$, then $\Xi$ is normal in $\Gam$ \cite[Theorem 1]{TL2004OSP}.
It is notable that normality of a subgyrogroup $H$ of a finite
gyrogroup $G$, where $[G\colon H]$ is the smallest prime dividing
the order of $G$, depends on the invariance of the left cosets of
$H$ in $G$ under the gyroautomorphisms of $G$, see Theorem \ref{thm:
Characterization of normality invariant under gyromap}.

\par Unless stated otherwise, $G$ is an arbitrary finite gyrogroup.

\par Let $G$ be a gyrogroup, let $a\in G$, and let
$m\in\Z$. Define recursively the following notation:
\begin{equation}
0a = 0,\hskip0.3cm ma = a\oplus ((m-1)a),\, m \geq 1,\hskip0.3cm ma
= (-m)(\ominus a),\, m < 0.
\end{equation}
By induction, one can verify the following usual rules of integral
multiples:
\begin{enumerate}
    \item $(-m)a = \ominus(ma) = m(\ominus a)$,
    \item $(m+k)a = (ma)\oplus(ka)$, and
    \item $(mk)a = m(ka)$
\end{enumerate}
for all $a\in G$ and $m, k\in\Z$.

\begin{theorem}\label{thm: Equivalence of subgyrogroup of prime index}
Suppose that $H$ is a subgyrogroup of a gyrogroup $G$ such that
$[G\colon H] = p$, $p$ being a prime. The following are equivalent:
\begin{enumerate}
    \item\label{item: pa in H} For any $a\in G\setminus H$, $pa\in
    H$.
    \item\label{item: na in H} For any $a\in G\setminus H$, $na\in
    H$ for some positive integer $n$, depending on $a$, with no
    prime divisor less than $p$.
    \item\label{item: a, 2a, 3a, not in H} For any $a\in G\setminus
    H$, $a, 2a,\dots, (p-1)a\not\in H$.
\end{enumerate}
\end{theorem}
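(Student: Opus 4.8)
The plan is to fix an element $a\in G\setminus H$ and to study the set of integers $m$ for which $ma$ lands in $H$. Concretely, I would introduce
$$I_a=\cset{m\in\Z}{ma\in H}$$
and first observe that $I_a$ is a subgroup of $\Z$. Indeed, $0a=0\in H$ gives $0\in I_a$; if $m,k\in I_a$ then $(m+k)a=(ma)\oplus(ka)\in H$ because $H$ is closed under $\oplus$; and if $m\in I_a$ then $(-m)a=\ominus(ma)\in H$ because $H$ is closed under inversion. Hence $I_a=d\Z$ for a unique integer $d=d(a)\geq 0$, and $d$ is the least positive integer with $da\in H$. Since $G$ is finite, $a$ has finite order, so $Na=0\in H$ for some $N\geq 1$; thus $d\geq 1$. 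As $a\notin H$ we have $1\notin I_a$, so in fact $d\geq 2$.

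The heart of the argument is the bound $d\leq p$, and this is the step I expect to be the main obstacle, since it is where the gyrogroup coset structure enters. I would use the standard description of left cosets, namely that two elements $x,y$ lie in the same left coset of $H$ exactly when $\ominus x\oplus y\in H$, together with the cancellation identity $\ominus(ka)\oplus(ma)=(-k)a\oplus(ma)=(m-k)a$, which follows from the integral-multiple rules $(-k)a=\ominus(ka)$ and $(s+t)a=(sa)\oplus(ta)$. Consequently $ma\oplus H=ka\oplus H$ iff $(m-k)a\in H$ iff $d\mid(m-k)$. In particular the cosets $0a\oplus H,\,a\oplus H,\dots,(d-1)a\oplus H$ are pairwise distinct, so they exhibit $d$ distinct left cosets of $H$; since the total number of left cosets is $[G\colon H]=p$, we get $d\leq p$. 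Thus for every $a\in G\setminus H$ we have $2\leq d(a)\leq p$.

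With $2\leq d\leq p$ in hand, I would reduce each of the three conditions, for a fixed $a$, to the single statement $d(a)=p$, after which the equivalence of (1), (2), (3) is immediate. For (1): $pa\in H$ iff $p\in d\Z$ iff $d\mid p$; as $p$ is prime and $d\geq 2$ this forces $d=p$, and conversely $d=p$ gives $pa\in H$. For (3): $a,2a,\dots,(p-1)a\notin H$ iff none of $1,\dots,p-1$ lies in $d\Z$ iff $d>p-1$, i.e.\ $d\geq p$, which together with $d\leq p$ gives $d=p$. For (2): if $d=p$ then $n=p$ witnesses the condition, while conversely any witness $n$ satisfies $d\mid n$, so a prime $q\mid d$ also divides $n$ and hence $q\geq p$ by the hypothesis on $n$; but $q\leq d\leq p$, forcing $q=d=p$.

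Hence each of (1), (2), (3) holds for a given $a\in G\setminus H$ precisely when $d(a)=p$, and therefore the three conditions, quantified over all $a\in G\setminus H$, are equivalent. The only genuinely nonroutine ingredient is the coset computation yielding $d\leq p$; the remaining reductions are elementary number theory resting on $2\leq d\leq p$ and the primality of $p$, and notably the hypothesis that $p$ be the \emph{smallest} prime dividing $|G|$ is not needed here.
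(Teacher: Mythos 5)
Your proof is correct, and at bottom it runs on the same fuel as the paper's: the least positive integer $d$ with $da\in H$ (the paper calls it $s$), the division-algorithm argument (your observation that $I_a$ is a subgroup of $\Z$ is exactly this step, repackaged), and the comparison of the distinct cosets $0\oplus H, a\oplus H,\dots$ against $[G\colon H]=p$. The organization differs: the paper proves the cycle (1)$\Rightarrow$(2)$\Rightarrow$(3)$\Rightarrow$(1), whereas you attach the invariant $d(a)$ to each $a\in G\setminus H$ and show that each of the three conditions at $a$ is equivalent to $d(a)=p$; this per-element dichotomy is arguably cleaner and makes the number-theoretic content transparent. Two caveats are worth recording. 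First, your coset criterion ``$ma\oplus H=ka\oplus H$ iff $(m-k)a\in H$'' is an overstatement for an arbitrary subgyrogroup: only the forward implication is justified (from $ma\in ka\oplus H$ and the left cancellation law, exactly as in the paper); the converse would require some gyration-invariance of $H$ (e.g., $H\leqslant_L G$), since $(x\oplus h)\oplus H$ need not coincide with $x\oplus H$ when $\gyr{h,x}{(H)}\neq H$. Fortunately your distinctness argument, and hence the bound $d\leq p$, uses only the forward direction, so nothing breaks. Second, you invoke finiteness of $G$ to secure $d\geq 1$. That is legitimate under the section's standing convention that $G$ is finite, but the paper's proof never needs it: in (2)$\Rightarrow$(3) a positive multiple of $a$ lying in $H$ is part of the hypothesis, and in (3)$\Rightarrow$(1) it falls out of the pigeonhole on the $p$ cosets. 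So the paper's argument proves the theorem for any gyrogroup with $[G\colon H]=p$, finite or not; you can recover that generality at no cost by noting that $d=0$ would force the cosets $ma\oplus H$, $m\in\Z$, to be pairwise distinct (again only the forward implication is needed), contradicting $[G\colon H]=p<\infty$.
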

\begin{proof}
\eqref{item: pa in H} $\Rightarrow$ \eqref{item: na in H} Choosing
$n = p$ gives item \eqref{item: na in H}.

\par \eqref{item: na in H} $\Rightarrow$ \eqref{item: a, 2a, 3a, not in
H} Let $a\in G\setminus H$ and let $n$ be as in item \eqref{item: na
in H}. By the well-ordering principle, we can let $s$ be the
smallest positive integer such that $sa\in H$. Note that $s > 1$.
Write $n = st+r$ with $0\leq r <s$. Then $ra = (n-st)a =
(na)\oplus(-st)a = (na)\ominus(t(sa))$. Thus, $ra\in H$ for $na,
sa\in H$. The minimality of $s$ forces $r = 0$, so $n = st$. If $s <
p$, then $s$ (and hence $n$) would have a prime divisor less than
$p$. Hence, $s\geq p$, which implies $a, 2a,\dots, (p-1)a\not\in H$.

\par \eqref{item: a, 2a, 3a, not in H} $\Rightarrow$ \eqref{item: pa in
H} First, we prove that $0\oplus H, a\oplus H,\dots, (p-1)a\oplus H$
are all distinct. Assume to the contrary that $ra\oplus H = sa\oplus
H$ for some integers $r$ and $s$ such that \mbox{$0\leq r < s\leq
p-1$}. Then $sa = (ra)\oplus h$ for some $h\in H$. It follows that
$(-r+s)a = \ominus(ra)\oplus(sa) = h\in H$. This contradicts the
assumption because $0 < s-r < p$.

\par Since $\abs{G/H} = [G\colon H] = p$, we have
$G/H = \set{0\oplus H, a\oplus H,\dots, (p-1)a\oplus H}$. Hence,
$pa\oplus H = ta\oplus H$ for some $t$ with $0\leq t\leq p-1$. As
before, the equality gives $(p-t)a\in H$. Since $0\leq t\leq p-1$,
we have $1\leq p-t \leq p$. By assumption, $p-t = p$, which implies
$t = 0$. Hence, $pa\oplus H = 0\oplus H = H$ and so $pa\in H$.
\end{proof}

\begin{proposition}\label{prop: Completed description of G/H}
Let $H$ be a subgyrogroup of a gyrogroup $G$ such that $[G\colon H]
= p$, $p$ being a prime. If $H$ satisfies one of the conditions in
Theorem \ref{thm: Equivalence of subgyrogroup of prime index}, then
$$G/H = \set{0\oplus H, a\oplus H,\dots, (p-1)a\oplus H}$$
for any $a\in G\setminus H$.
\end{proposition}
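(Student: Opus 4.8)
The plan is to derive the result from two facts: the $p$ cosets $0\oplus H, a\oplus H, \dots, (p-1)a\oplus H$ are pairwise distinct, and $G/H$ has exactly $p$ elements. Since $H$ satisfies one of the three conditions of Theorem \ref{thm: Equivalence of subgyrogroup of prime index}, by that theorem it satisfies all three; in particular I may invoke condition \eqref{item: a, 2a, 3a, not in H}, which guarantees that $a, 2a, \dots, (p-1)a\notin H$ for every $a\in G\setminus H$. Fix such an $a$.

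First I would establish pairwise distinctness of the cosets $ra\oplus H$ for $0\leq r\leq p-1$. Supposing instead that $ra\oplus H = sa\oplus H$ with $0\leq r < s\leq p-1$, I have $sa = (ra)\oplus h$ for some $h\in H$. Applying the integral-multiple rules $(m+k)a = (ma)\oplus(ka)$ and $(-m)a = \ominus(ma)$ stated above, together with the left cancellation law, I compute $(s-r)a = ((-r)+s)a = \ominus(ra)\oplus(sa) = \ominus(ra)\oplus((ra)\oplus h) = h\in H$. Since $0 < s-r < p$, this contradicts condition \eqref{item: a, 2a, 3a, not in H}, so the $p$ cosets are distinct.

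Finally, because $\abs{G/H} = [G\colon H] = p$ and I have exhibited $p$ distinct cosets lying in $G/H$, these cosets must be all of $G/H$, which is the claim. The only substantive step is the distinctness computation, and it is in essence the one already performed inside the proof of Theorem \ref{thm: Equivalence of subgyrogroup of prime index}; the closing step is a pigeonhole count. I anticipate no real obstacle, since the argument never multiplies distinct elements and so never meets nonassociativity: everything takes place among integral multiples of the single element $a$, where the cancellation law and the multiple identities behave just as in a group.
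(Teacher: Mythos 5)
Your proposal is correct and follows essentially the same route as the paper: the paper's proof also reduces to condition \eqref{item: a, 2a, 3a, not in H} and then appeals to the distinctness-plus-counting argument already carried out inside the proof of Theorem \ref{thm: Equivalence of subgyrogroup of prime index}, which you have simply written out explicitly (and correctly, using only the integral-multiple rules and left cancellation).
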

\begin{proof}
There is no loss in assuming that $H$ satisfies condition
\eqref{item: a, 2a, 3a, not in H} of Theorem \ref{thm: Equivalence
of subgyrogroup of prime index}. As proved in Theorem \ref{thm:
Equivalence of subgyrogroup of prime index}, $G/H = \set{0\oplus H,
\dots, (p-1)a\oplus H}$ for any $a\not\in H$.
\end{proof}

\begin{proposition}\label{prop: Smallest prime implies one of condition}
Let $H$ be a subgyrogroup of $G$. If $[G\colon H]$ is the smallest
prime \mbox{dividing} the order of $G$, then $H$ satisfies condition
\eqref{item: na in H} of Theorem \ref{thm: Equivalence of
subgyrogroup of prime index}.
\end{proposition}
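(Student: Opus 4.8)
The plan is to exhibit the required integer directly rather than to analyze cosets: for a given $a\in G\setminus H$ I would take $n$ to be the \emph{order} of $a$, i.e.\ the least positive integer with $na = 0$. Since $0\in H$, such an $n$ satisfies $na\in H$ for free, so the entire statement reduces to proving that the order of any element of $G$ has no prime divisor smaller than $p = [G\colon H]$.

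First I would justify that the order of $a$ is well defined and equals $\abs{\gen{a}}$. The integral-multiple identities recorded just above the statement show that $\gen{a} = \cset{ma}{m\in\Z}$ contains $0$ and is closed under $\oplus$ and under inversion, so it is a subgyrogroup of $G$; moreover the identity $(m+k)a = (ma)\oplus(ka)$ makes $\oplus$ both commutative and associative on $\gen{a}$, so $\gen{a}$ is a finite cyclic subgroup of $G$ and $n := \abs{\gen{a}}$ is the least positive integer with $na = 0$. The decisive step is then the Lagrange theorem for finite gyrogroups (as recalled in the Introduction): applied to the subgyrogroup $\gen{a}$ it gives $n = \abs{\gen{a}}\mid\abs{G}$. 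Hence every prime dividing $n$ divides $\abs{G}$, and since $p$ is by hypothesis the smallest prime dividing $\abs{G}$, each such prime is at least $p$. Thus $n$ has no prime divisor less than $p$, and together with $na = 0\in H$ this is precisely condition \eqref{item: na in H} of Theorem \ref{thm: Equivalence of subgyrogroup of prime index}.

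I do not expect a genuine obstacle here; the gyrogroup-specific content is entirely absorbed into the integral-multiple rules (which produce the cyclic subgroup $\gen{a}$) and into the Lagrange theorem (which forces $\abs{\gen{a}}\mid\abs{G}$). The one point deserving care is confirming that $\gen{a}$ is an honest subgroup, so that ``order of $a$'' means what one expects and Lagrange may be applied; this is guaranteed by the commutativity and associativity that the identity $(m+k)a = (ma)\oplus(ka)$ forces on $\gen{a}$. Note finally that the argument never uses $a\notin H$ beyond restricting attention to the quantified range, which is harmless.
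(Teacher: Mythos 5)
Your proof is correct and takes essentially the same route as the paper: the paper simply chooses $n = \abs{G}$, citing the known fact that $\abs{G}a = 0\in H$ for every element $a$ of a finite gyrogroup, which is precisely the consequence of the gyrogroup Lagrange theorem applied to $\gen{a}$ that you re-derive. Taking $n$ to be the order of $a$ rather than $\abs{G}$ changes nothing essential, since both choices satisfy condition \eqref{item: na in H} for the same divisibility reason.
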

\begin{proof}
By Proposition 6.1 of \cite{TSKW2014LTG}, $\abs{G}a = 0\in H$. Since
$\abs{G}$ has no prime divisors less than $[G\colon H]$, condition
\eqref{item: na in H} of Theorem \ref{thm: Equivalence of
subgyrogroup of prime index} holds.
\end{proof}

\begin{theorem}\label{thm: Characterization of normality invariant under gyromap}
Let $H$ be a subgyrogroup of $G$ such that $[G\colon H]$ is the
smallest prime dividing the order of $G$. Then $H\unlhd G$ if and
only if there is an element $y\in G\setminus H$ such that
$$\gyr{a, b}{(iy\oplus H)}\subseteq iy\oplus H$$
for all $a, b\in G$ and $i\in\set{0,1,\dots,p-1}$.
\end{theorem}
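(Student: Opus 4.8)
The plan is to reduce normality of $H$ to the existence of a genuine gyrogroup homomorphism of $G$ onto the cyclic group $\Z_p$, and to read the displayed condition $(\star)$ as precisely the statement that guarantees this homomorphism exists. Fix $y\in G\setminus H$. By Proposition \ref{prop: Smallest prime implies one of condition} and Theorem \ref{thm: Equivalence of subgyrogroup of prime index} we have $py\in H$, and by Proposition \ref{prop: Completed description of G/H} the left cosets are exactly $C_i=iy\oplus H$ for $i\in\set{0,1,\dots,p-1}$; thus every $a\in G$ has a well-defined \emph{index} $\psi(a)=i$ determined by $a\in C_i$, and $\psi(a)=0$ iff $a\in H$. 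The guiding remark is that, because $(\star)$ says $\gyr{a,b}{(C_i)}\subseteq C_i$ and each $\gyr{a,b}{}$ is an automorphism fixing $0$, any gyration term landing in $H$ may be absorbed without changing an index, and more generally $\psi(\gyr{a,b}{c})=\psi(c)$.

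For the forward direction, suppose $H\unlhd G$. Then the canonical projection $\pi\colon G\to G/H$ is a homomorphism, $\gyr{a,b}{(H)}=H$ (kernels are gyration-invariant), and $G/H$ is a gyrogroup of order $p$ generated by $\pi(y)$. The stated rules of integral multiples make $\set{m\,\pi(y):m\in\Z}$ an associative, hence group, structure, and from the left gyroassociative law together with left cancellation one deduces $\gyr{m\pi(y),k\pi(y)}{(l\pi(y))}=l\pi(y)$ for all $l$; since $G/H$ is exhausted by these multiples, every gyration of $G/H$ is the identity. Intertwining with $\pi$ then gives $\pi(\gyr{a,b}{(iy)})=\gyr{\pi(a),\pi(b)}{(i\pi(y))}=i\pi(y)=\pi(iy)$, so $\gyr{a,b}{(iy)}\in C_i$; combined with $\gyr{a,b}{(H)}=H$ this yields $(\star)$ for this choice of $y$.

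For the backward direction, assume $(\star)$. The goal is to show that $\psi\colon G\to\Z_p$ is a gyrogroup homomorphism, since then $H=\ker\psi$ is normal by definition. Two facts follow quickly from the gyroassociative laws and $(\star)$: (i) left translation by a power of $y$ shifts the index, $\psi(iy\oplus X)=i+\psi(X)$ for all $X$ (induct on $i$ using $y\oplus iy=(i+1)y$ and absorbing the resulting gyration into $H$); and (ii) writing $a=iy\oplus s$ with $s\in H$ and applying the right gyroassociative law, $a\oplus b=iy\oplus(s\oplus\gyr{s,iy}{b})$, so by (i) and $\psi(\gyr{s,iy}{b})=\psi(b)$ the whole computation collapses to the single claim $(\clubsuit)$: $\psi(s\oplus c)=\psi(c)$ for every $s\in H$ and $c\in G$, equivalently $\psi(s\oplus jy)=j$.

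I expect $(\clubsuit)$ to be the main obstacle: it is the gyrogroup analogue of the coset identity $a\oplus H=H\oplus a$, and it cannot be reached by formal gyrocalculus alone, since any attempt to slide $s\in H$ past $y$ simply regenerates another instance of $(\clubsuit)$. Here the smallest‑prime hypothesis must enter, in the spirit of the classical coset‑action argument. Using $(\star)$ one checks that each left translation $L_s$ with $s\in H$ carries $C_j$ onto $C_{\psi(s\oplus jy)}$, hence permutes the $p$ cosets; let $\sigma_s$ denote the induced permutation of $\set{0,\dots,p-1}$. Each $\sigma_s$ fixes $0$, and a short computation with the left gyroassociative law and $(\star)$ gives $\sigma_{s'}\circ\sigma_s=\sigma_{s'\oplus s}$; consequently $\set{\sigma_s:s\in H}$ is a subgroup $\overline{H}$ of the stabilizer of $0$ in $\sym{\set{0,\dots,p-1}}$, so $\abs{\overline{H}}$ divides $(p-1)!$. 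On the other hand the assignment $s\mapsto\sigma_s$ has fibers equal to the cosets of the subgyrogroup $\set{s\in H:\sigma_s=\id{}}$, so by Lagrange $\abs{\overline{H}}$ divides $\abs{H}$ and hence $\abs{G}$. Since $p$ is the smallest prime dividing $\abs{G}$, we get $\gcd(\abs{G},(p-1)!)=1$, forcing $\abs{\overline{H}}=1$; that is, $\sigma_s=\id{}$ for all $s\in H$, which is exactly $(\clubsuit)$. Together with (i) and (ii) this yields $\psi(a\oplus b)=\psi(a)+\psi(b)$, so $\psi$ is a homomorphism with kernel $H$, and $H\unlhd G$.
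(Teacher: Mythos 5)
Your proposal is correct, and while it rests on the same arithmetic engine as the paper, it extracts normality by a genuinely different route. Common core: under $(\star)$ the subgyrogroup $H$ is invariant under all gyrations, the $p$ cosets $iy\oplus H$ partition $G$, and the induced permutation action on these cosets restricted to $H$ has image of order dividing both $(p-1)!$ and $\abs{G}$, hence trivial since $p$ is the smallest prime dividing $\abs{G}$. The paper then stays at the level of $G$: it cites the gyrogroup-action machinery of \cite{TS2015GAG} (left gyroaddition gives an action of $G$ on $G/H$, whose permutation representation $\dot\vphi$ satisfies $\ker{\dot\vphi}\subseteq H$ and $[G\colon\ker{\dot\vphi}]\mid p!$) and concludes $H=\ker{\dot\vphi}\unlhd G$ by the coprimality count. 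You instead restrict the coset action to $H$ (your $\sigma_s$), use coprimality to prove $H$ acts trivially (your $(\clubsuit)$), and then assemble by hand the coset-index map $\psi\colon G\to\Z_p$ and verify via the gyroassociative laws and $(\star)$ that it is a gyrogroup homomorphism with kernel $H$. Your route is more self-contained (no appeal to \cite{TS2015GAG} or to the first isomorphism theorem applied to $G$) and it exhibits $G/H\cong\Z_p$ explicitly; the cost is more gyrocalculus, and two points you should state rather than assume: well-definedness of $\psi$ needs the cosets $iy\oplus H$ to be pairwise disjoint, which is not automatic for arbitrary subgyrogroups but follows from $(\star)$ (gyration-invariance makes $H$ an L-subgyrogroup, so its cosets partition $G$), and the divisibility $\abs{\overline{H}}\mid\abs{H}\mid\abs{G}$ needs the first isomorphism theorem for $s\mapsto\sigma_s$ together with Lagrange's theorem for L-subgyrogroups---facts the paper also uses tacitly. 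Your forward direction matches the paper's, except that where the paper cites Theorem 6.2 of \cite{TSKW2014LTG} (a gyrogroup of prime order is a cyclic group), you re-derive that fact for $G/H$ from the integral-multiple rules before pulling trivial gyrations back along the projection.
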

\begin{proof}Set $[G\colon H] = p$.
\par ($\Rightarrow$) Suppose that $H\unlhd G$. Then $G/H$ admits the
gyrogroup structure and becomes a gyrogroup of order $p$. By Theorem
6.2 of \cite{TSKW2014LTG}, $G/H$ forms a cyclic group. In
particular, $\gyr{X, Y}{Z} = Z$ for all $X, Y, Z\in G/H$. Let $a, b,
c$ be arbitrary elements of $G$. Set $X = a\oplus H$ and $Y =
b\oplus H$. From Theorem 27 of \cite{TSKW2015ITG}, we have $c\oplus
H = \gyr{X, Y}{(c\oplus H)} = (\gyr{a, b}{c})\oplus H$. Since
$H\unlhd G$, $\gyr{a, b}{(H)} = H$, which implies $(\gyr{a,
b}{c})\oplus H = \gyr{a, b}{(c\oplus H)}$. Hence, $\gyr{a,
b}{(c\oplus H)} = c\oplus H$.

\par ($\Leftarrow$) Let $y$ be as in the assumption.
By Propositions \ref{prop: Completed description of G/H} and
\ref{prop: Smallest prime implies one of condition}, $$G/H =
\set{0\oplus H, y\oplus H,\dots, (p-1)y\oplus H}.$$ For each $x\in
G$, $x\oplus H = iy\oplus H$ for some $i\in\set{0, 1, \dots, p-1}$.
By assumption,
$$\gyr{a, b}{(x\oplus H)} = \gyr{a, b}{(iy\oplus H )}\subseteq
iy\oplus H = x\oplus H.$$ By Theorem 4.5 of \cite{TS2015GAG}, $G$
acts on $G/H$ by left gyroaddition. By \mbox{Proposition 3.5 (2)}
and Theorem 4.6 of \cite{TS2015GAG}, $\ker{\dot\vphi}\subseteq H$,
where $\dot\vphi$ is the associated permutation
\mbox{representation} of $G$. By the first isomorphism theorem
\cite[Theorem 28]{TSKW2015ITG}, $$G/\ker{\dot\vphi}\cong
\im{\dot\vphi}\leqslant \sym{G/H}.$$ Hence,
$[G\colon\ker{\dot\vphi}]$ divides $p!$. Since
$\ker{\dot\vphi}\leqslant_L G$ and $H\leqslant_L G$, we have
$$[G\colon\ker{\dot\vphi}] = [G\colon H][H\colon \ker{\dot\vphi}] =
p[H\colon \ker{\dot\vphi}],$$ which implies
$[H\colon\ker{\dot\vphi}]$ divides $(p-1)!$. If
$[H\colon\ker{\dot\vphi}] > 1$, one would find a prime $q$ dividing
$[H\colon\ker{\dot\vphi}]$ and would have $q|(p-1)!$. Thus, $q < p$
and $q$ divides $\abs{H}$. Since $\abs{H}$ divides $\abs{G}$, we
have $q$ divides $\abs{G}$, a contradiction. Hence,
$[H\colon\ker{\dot\vphi}] = 1$ and so $H = \ker{\dot\vphi}\unlhd G$.
\end{proof}

\par Recall from abstract algebra that a subgroup of a group $\Gam$
of index two is normal in $\Gam$. This result can be generalized to
the case of gyrogroups as follows.

\begin{theorem}\label{thm: Subgyrogroup of index 2}
If $H$ is a subgyrogroup of $G$ such that $\gyr{a, b}{(H)}\subseteq
H$ for all $a, b\in G$ and $[G\colon H] = 2$, then $H\unlhd G$.
\end{theorem}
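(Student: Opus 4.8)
My plan is to deduce this from the index-$p$ criterion, Theorem~\ref{thm: Characterization of normality invariant under gyromap}, applied with $p = 2$. The first step is to check that $2$ is the smallest prime dividing $\abs{G}$: since $[G\colon H] = 2$ means $\abs{G} = 2\abs{H}$, the prime $2$ divides $\abs{G}$, and being the least prime of all it is in particular the least prime divisor of $\abs{G}$. Thus $H$ and $G$ meet the hypotheses of Theorem~\ref{thm: Characterization of normality invariant under gyromap} with $p = 2$, and it suffices to produce a single $y\in G\setminus H$ satisfying $\gyr{a, b}{(iy\oplus H)}\subseteq iy\oplus H$ for all $a, b\in G$ and $i\in\set{0, 1}$.

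Next I would fix an arbitrary $y\in G\setminus H$ and pin down the two left cosets. By Propositions~\ref{prop: Smallest prime implies one of condition} and~\ref{prop: Completed description of G/H}, $G/H = \set{0y\oplus H,\, y\oplus H}$; equivalently, since the left cosets of $H$ partition $G$ and there are exactly two of them, they are $H$ itself and its complement, whence $y\oplus H = G\setminus H$ for any $y\notin H$. The case $i = 0$ is free, as the required inclusion $\gyr{a, b}{(H)}\subseteq H$ is precisely the standing hypothesis on $H$. For $i = 1$ the idea is to exploit that every gyroautomorphism $\gyr{a, b}{}$ is in particular a bijection of the finite set $G$: from $\gyr{a, b}{(H)}\subseteq H$ together with $\abs{\gyr{a, b}{(H)}} = \abs{H}$ one obtains $\gyr{a, b}{(H)} = H$, and a bijection of $G$ fixing $H$ setwise must fix the complement setwise, so $\gyr{a, b}{(y\oplus H)} = \gyr{a, b}{(G\setminus H)} = G\setminus H = y\oplus H$. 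This establishes the inclusion for $i = 1$ (with equality, in fact), and Theorem~\ref{thm: Characterization of normality invariant under gyromap} then delivers $H\unlhd G$.

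The argument is short because all the heavy lifting is hidden in the index-$p$ criterion; the only points requiring care are the two bookkeeping observations above, namely that $2\mid\abs{G}$ forces $2$ to be the smallest prime divisor, and that the hypothesised set inclusion $\gyr{a, b}{(H)}\subseteq H$ must be promoted to the equality $\gyr{a, b}{(H)} = H$ before one can conclude that the nontrivial coset $y\oplus H$ is preserved. Both promotions rely only on the finiteness of $G$ and the bijectivity of gyroautomorphisms, so no further gyrogroup identities are needed. I would therefore expect the principal obstacle to be conceptual, namely recognising that index $2$ reduces the verification exactly to coset-invariance on the single complementary coset, rather than computational.
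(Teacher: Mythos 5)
Your proposal is correct, and at the structural level it is the same proof as the paper's: fix $y\in G\setminus H$, invoke Propositions \ref{prop: Smallest prime implies one of condition} and \ref{prop: Completed description of G/H} to get $G/H=\set{H,\, y\oplus H}$, verify the coset-invariance hypothesis of Theorem \ref{thm: Characterization of normality invariant under gyromap}, and conclude $H\unlhd G$. The only divergence is in the final verification. The paper works element-wise: $\gyr{a, b}{(y\oplus h)} = (\gyr{a, b}{y})\oplus(\gyr{a, b}{h})\in(\gyr{a, b}{y})\oplus H$, and since $\gyr{a, b}{y}\notin H$ (otherwise $y = \gyr{b, a}{\tilde{h}}\in H$ by the hypothesis applied to $\gyr{b,a}{}$), the coset $(\gyr{a, b}{y})\oplus H$ is one of the two cosets and is not $H$, hence equals $y\oplus H$. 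You work set-wise: finiteness promotes $\gyr{a, b}{(H)}\subseteq H$ to $\gyr{a, b}{(H)} = H$, and a bijection of $G$ preserving $H$ preserves its complement $G\setminus H = y\oplus H$.

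The one step you should not treat as free is precisely the identification $y\oplus H = G\setminus H$, which you justify by saying that ``the left cosets of $H$ partition $G$.'' In a gyrogroup this is not automatic: for an arbitrary subgyrogroup the relation $\ominus a\oplus b\in H$ need not be symmetric or transitive, and distinct left cosets can overlap. It does hold here, but only because the hypothesis together with finiteness gives $\gyr{a, b}{(H)} = H$ for all $a, b\in G$, so that $H$ is in particular an L-subgyrogroup of $G$, and left cosets of L-subgyrogroups do partition $G$ --- this is part of the coset theory of \cite{TSKW2015ITG} that also underlies your opening claim $\abs{G} = [G\colon H]\,\abs{H}$ (needed to see that $2$ divides $\abs{G}$; the paper leaves this same point implicit). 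Once this is said, your argument is complete. It is worth noticing that the paper's element-wise finish sidesteps the issue entirely: it uses only that $G/H$ has exactly two elements and that $w\in w\oplus H$ for every $w\in G$, never that the two cosets are disjoint.
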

\begin{proof}
Let $y\in G\setminus H$ be fixed. By Propositions \ref{prop:
Completed description of G/H} and \ref{prop: Smallest prime implies
one of condition}, \mbox{$G/H = \set{H, y\oplus H}$}. To complete
the proof, we show that $\gyr{a, b}{(y\oplus H)}\subseteq y\oplus H$
for all $a, b\in G$. If $z\in \gyr{a, b}{(y\oplus H)}$, then $z =
\gyr{a, b}{(y\oplus h)} = (\gyr{a, b}{y})\oplus(\gyr{a, b}{h})$ for
some $h\in H$. By assumption, $z\in (\gyr{a, b}{y})\oplus H$. Note
that $\gyr{a, b}{y}\not\in H$ since otherwise $\gyr{a, b}{y} =
\tilde{h}\in H$ would imply $y = \igyr{a, b}{\tilde{h}} = \gyr{b,
a}{\tilde{h}}\in H$, a contradiction. Hence, $(\gyr{a, b}{y})\oplus
H = y\oplus H$ and so $z\in y\oplus H$. This proves $\gyr{a,
b}{(x\oplus H)}\subseteq x\oplus H$ for all $a, b, x\in G$. By
Theorem \ref{thm: Characterization of normality invariant under
gyromap}, $H\unlhd G$.
\end{proof}

\section*{Acknowledgments}
The author is grateful to the Department of Mathematics, North
Dakota State University where he was a visiting fellow during this
work. He would like to thank \mbox{Professor} Abraham Ungar for his
hospitality. This work was completed with the support of Development
and Promotion of Science and Technology Talents Project (DPST),
Institute for Promotion of Teaching Science and Technology (IPST),
\mbox{Thailand}.

\bibliographystyle{amsplain}\addcontentsline{toc}{section}{References}
\bibliography{References}

\providecommand{\bysame}{\leavevmode\hbox to3em{\hrulefill}\thinspace}
\providecommand{\MR}{\relax\ifhmode\unskip\space\fi MR }
% \MRhref is called by the amsart/book/proc definition of \MR.
\providecommand{\MRhref}[2]{%
  \href{http://www.ams.org/mathscinet-getitem?mr=#1}{#2}
}
\providecommand{\href}[2]{#2}
\begin{thebibliography}{10}

\bibitem{TAOH2015GGS}
T.~Abe and O.~Hatori, \emph{Generalized gyrovector spaces and a {M}azur-{U}lam
  theorem}, Publ. Math. Debrecen \textbf{87} (2015), 393--413.

\bibitem{MF2009SCW}
M.~Ferreira, \emph{Spherical continuous wavelet transforms arising from
  sections of the {L}orentz group}, Appl. Comput. Harmon. Anal. \textbf{26}
  (2009), 212--229.

\bibitem{MF2014HAE}
\bysame, \emph{Harmonic analysis on the {E}instein gyrogroup}, J. Geom.
  Symmetry Phys. \textbf{35} (2014), 21--60.

\bibitem{MF2015HAM}
\bysame, \emph{Harmonic analysis on the {M}\"{o}bius gyrogroup}, J. Fourier
  Anal. Appl. \textbf{21} (2015), no.~2, 281--317.

\bibitem{TFMKJP2006OTS}
T.~Foguel, M.~K. Kinyon, and J.D. Phillips, \emph{On twisted subgroups and
  {B}ol loops of odd order}, Rocky Mountain J. Math. \textbf{36} (2006),
  183--212.

\bibitem{TFAU2000IDG}
T.~Foguel and A.~A. Ungar, \emph{Involutory decomposition of groups into
  twisted subgroups and subgroups}, J. Group Theory \textbf{3} (2000), 27--46.

\bibitem{TFAU2001GDG}
\bysame, \emph{Gyrogroups and the decomposition of groups into twisted
  subgroups and subgroups}, Pacific J. Math. \textbf{197} (2001), 1--11.

\bibitem{WKHU1998KLG}
W.~Krammer and H.~K. Urbantke, \emph{K-loops, gyrogroups and symmetric spaces},
  Results Math. \textbf{33} (1998), 310--327.

\bibitem{TL2004OSP}
T.~Y. Lam, \emph{On subgroups of prime index}, Amer. Math. Monthly \textbf{111}
  (2004), no.~3, 256--258.

\bibitem{JPSK2015HPM}
J.~Park and S.~Kim, \emph{{H}ilbert projective metric on a gyrogroup of qubit
  density matrices}, Rep. Math. Phys. \textbf{76} (2015), no.~3, 389--400.

\bibitem{TS2015TAG}
T.~Suksumran, \emph{Essays in mathematics and its applications: In honor of
  {V}ladimir {A}rnold, \normalfont{{P}. {M}. {P}ardalos and {T}. {M}. {R}assias
  (eds.)}}, ch.~The Algebra of Gyrogroups: {C}ayley Theorem, {L}agrange
  Theorem, and Isomorphism Theorems, Springer, in press.

\bibitem{TS2015GAG}
\bysame, \emph{Gyrogroup actions: A generalization of group actions}, J.
  Algebra \textbf{454} (2016), 70--91.

\bibitem{TSKW2014LTG}
T.~Suksumran and K.~Wiboonton, \emph{{L}agrange's theorem for gyrogroups and
  the {C}auchy property}, Quasigroups Related Systems \textbf{22} (2014),
  no.~2, 283--294.

\bibitem{TSKW2015EGB}
\bysame, \emph{{E}instein gyrogroup as a {B}-loop}, Rep. Math. Phys.
  \textbf{76} (2015), 63--74.

\bibitem{TSKW2015ITG}
\bysame, \emph{Isomorphism theorems for gyrogroups and {L}-subgyrogroups}, J.
  Geom. Symmetry Phys. \textbf{37} (2015), 67--83.

\bibitem{AU1997TPI}
A.~A. Ungar, \emph{{T}homas precession: Its underlying gyrogroup axioms and
  their use in hyperbolic geometry and relativistic physics}, Found. Phys.
  \textbf{27} (1997), no.~6, 881--951.

\bibitem{AU2002THG}
\bysame, \emph{The hyperbolic geometric structure of the density matrix for
  mixed state qubits}, Found. Phys. \textbf{32} (2002), no.~11, 1671--1699.

\bibitem{AU2007EVA}
\bysame, \emph{{E}instein's velocity addition law and its hyperbolic geometry},
  Comput. Math. Appl. \textbf{53} (2007), 1228--1250.

\bibitem{AU2008AHG}
\bysame, \emph{Analytic hyperbolic geometry and {A}lbert {E}instein's {S}pecial
  {T}heory of {R}elativity}, World Scientific, Hackensack, NJ, 2008.

\bibitem{AU2010BCE}
\bysame, \emph{Barycentric calculus in {E}uclidean and hyperbolic geometry: A
  \mbox{Comparative} introduction}, World Scientific, Hackensack, NJ, 2010.

\end{thebibliography}
\end{document}